\newtheorem{theorem}{Theorem}[section]
\newtheorem{corollary}[theorem]{Corollary}
\newtheorem{proposition}[theorem]{Proposition}
\theoremstyle{definition}
\newtheorem{definition}[theorem]{Definition}
\newtheorem{example}[theorem]{Example}
\newtheorem{remark}[theorem]{Remark}
\newtheorem*{problem}{Problem}
\numberwithin{equation}{section}
\newcommand{\abs}[1]{\lvert#1\rvert} %  Absolute value
\newcommand{\norm}[1]{\lVert#1\rVert} %  Norm 
\newcommand{\dpair}[2]{\langle#1, #2\rangle} % dual pairing
\newcommand{\R}{\mathbb{R}}  % The real numbers.
\newcommand{\N}{\mathbb{N}}  % The natural numbers.
\newcommand{\F}{\mathcal{F}} % family of mappings
\numberwithin{equation}{section}
\DeclareMathOperator{\h}{\mathbf{h}} % Metric functional
\let\@fnsymbol\@arabic
\author{
    Armando W. Guti\'{e}rrez\thanks{
        VTT Technical Research Centre of Finland Ltd, \texttt{armando.w.gutierrez@vtt.fi}} 
    \; and \; 
    Olavi Nevanlinna\thanks{
        Aalto University, 
        \texttt{olavi.nevanlinna@aalto.fi}}
    }
\title{Subinvariant metric functionals for 
nonexpansive mappings}
\date{}
\begin{document}
\maketitle

\begin{abstract}
We investigate the existence of subinvariant 
metric functionals for commuting families of 
nonexpansive mappings in noncompact subsets 
of Banach spaces. Our findings underscore 
the practicality of metric functionals when
searching for fixed points of nonexpansive 
mappings. To demonstrate 
this, we additionally investigate subsets of 
Banach spaces that have only nontrivial 
metric functionals. We particularly show that 
in certain cases every metric functional has 
a unique minimizer; thus, subinvariance 
implies the existence of a fixed point.
\end{abstract}

\section{Introduction}
The purpose of this note is to demonstrate 
the utility of metric functionals in the 
study of fixed points of nonexpansive mappings. 

A mapping $T:E \to E$ defined on a metric 
space $(E,d)$ is called nonexpansive if 
$\,d(Tx,Ty) \leq d(x,y)$ for all $x,y\in E$.
Nonexpansive mappings are not simply abstract 
objects studied by mathematicians; they also 
form the backbone of many important 
methods used by applied science
practitioners. For example, nonexpansive 
mappings are naturally found in the design of 
iterative methods for optimization 
algorithms, nonlinear evolution equations,
and control systems \cite{NAandOpt2010, FPalg2011}.
Such methods are designed as follows: one 
associates the original problem with a family 
of nonexpansive mappings $T_{\mu}:E\to E$ so
that a common fixed point $z=T_{\mu} z$ 
(for all $\mu$) gives a solution to the 
problem, then one builds in the metric 
space $E$ a sequence of points $(x_\mu)$ that
under certain conditions converges to $z$. 
It is worth mentioning that the existence of 
a common fixed point for a given family of 
nonexpansive mappings is not guaranteed in 
general.

The main key in our research approach is the 
notion of a metric functional. The idea is 
simple and of common form within mathematics: 
one considers the metric space been 
embedded into a larger space (the set of 
metric functionals) where a solution to a
``weaker" problem may be available, then 
sometimes one can show that this procedure
yields a solution in the original space and 
formulation. This idea is rigorously defined 
below.
 
\begin{definition}
Let $(E,d)$ be a metric space. We denote 
by $\R^E$ the space of all functionals from $E$ 
to $\R$ and equip it with the topology of 
pointwise convergence. We fix a point $o\in E$ 
and consider the mapping $w\mapsto \h_w$ 
from $E$ to $\R^E$ defined by the formula
\begin{equation}\label{eq:int_mf}
    \h_w(x) = d(x,w)-d(o,w)\ \ 
    \text{for all}\ \ x\in E. 
\end{equation}
We denote by $E^{\diamondsuit}$ the closure 
of the set
$\,\{\h_w \mid w\in E\}$ in $\R^E$ and we 
call each element of $E^{\diamondsuit}$ a
metric functional. 
\end{definition}

\begin{proposition}
\textup{(\cite[Chapter~3]{GutierrezThesis})}
The following properties hold:
\begin{enumerate}
\item The mapping $w\mapsto\h_w$ from $E$ to 
    $\R^E$ is injective and continuous.
\item The space $E^{\diamondsuit}$ is compact 
    and Hausdorff.     
\end{enumerate}
\end{proposition}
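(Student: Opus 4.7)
The plan is to verify each of the four properties separately, using only the definition of $\h_w$ together with the triangle and reverse triangle inequalities; the topological parts will follow from a Tychonoff-style embedding into a product of compact intervals.

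For injectivity, the strategy is to suppose $\h_w=\h_{w'}$ and then evaluate both sides at two cleverly chosen points. Evaluating at $x=w$ gives $-d(o,w)=d(w,w')-d(o,w')$, and evaluating at $x=w'$ gives $d(w,w')-d(o,w)=-d(o,w')$. Adding these two identities yields $2d(w,w')=0$, hence $w=w'$. This is the only non-routine step on the algebraic side, and it is short.

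For continuity of $w\mapsto\h_w$, since the target carries the topology of pointwise convergence, it suffices to show that for every fixed $x\in E$ the map $w\mapsto\h_w(x)=d(x,w)-d(o,w)$ is continuous on $E$; this is immediate from the continuity of the metric (or, if one wants a uniform bound, from the reverse triangle inequality $|\h_{w}(x)-\h_{w'}(x)|\le 2d(w,w')$). No difficulty is expected here.

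For compactness and the Hausdorff property, the key observation is the uniform pointwise bound
\begin{equation*}
    |\h_w(x)| = |d(x,w)-d(o,w)| \le d(o,x),
\end{equation*}
valid for every $w\in E$ by the reverse triangle inequality. Hence $\{\h_w\mid w\in E\}$ is contained in the product $K:=\prod_{x\in E}[-d(o,x),d(o,x)]\subset\R^E$. By Tychonoff's theorem $K$ is compact in the product (i.e.\ pointwise) topology, and $K$ is closed in $\R^E$ because each coordinate interval is closed. Consequently the closure $E^{\diamondsuit}$ of $\{\h_w\}$ in $\R^E$ lies inside $K$ and is closed there, so it is compact. The Hausdorff property is inherited from $\R^E$, which is a product of copies of the Hausdorff space $\R$. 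The only conceptual point worth flagging is the verification that $K$ really is closed in $\R^E$, which is the main (mild) obstacle; everything else reduces to standard product-topology reasoning.
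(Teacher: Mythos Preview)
Your argument is correct in all four parts: the injectivity computation, the coordinatewise continuity, the Tychonoff embedding into $\prod_{x\in E}[-d(o,x),d(o,x)]$, and the inheritance of the Hausdorff property from $\R^E$ are exactly the standard verifications for this construction. Note, however, that the paper does not give its own proof of this proposition; it is merely stated with a citation to \cite[Chapter~3]{GutierrezThesis}. The argument you wrote is the standard one (and is the one found in the cited reference and in the literature on metric or horofunction compactifications), so there is nothing to compare beyond observing that your proof is the expected one.
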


Since each $w\in E$ is uniquely identified 
with $\h_w$, we may view $E$ as been embedded 
into the compact space $E^\diamondsuit$. For
that reason, we call the metric functionals 
of the form (\ref{eq:int_mf}) \emph{internal}. 

When a subset $X$ of $E$ is considered 
equipped with the same metric $d$, we can 
similarly build the compact space 
$X^\diamondsuit$. It is worth noting that 
each internal metric functional 
$\h_w \in X^\diamondsuit$ can be trivially 
defined on the whole space $E$; thus, for 
each $\,\h\in X^\diamondsuit\,$ 
there exists 
$\,\widetilde{\h}\in E^\diamondsuit\,$ such 
that $\,\widetilde{\h}(x)=\h(x)$ for all 
$x\in X$. This extension statement
was recorded by Karlsson in 
\cite{Karlsson2021}. In this context,
{\it we will always assume that}
$X^\diamondsuit$ is a subset of 
$E^\diamondsuit$ and hence operates in all 
points of $E$. 

\begin{definition}  
We say that a functional $f \in \R^E\,$ is 
\emph{subinvariant} for a mapping 
$T: E \to E$ if for all $x \in E$ we have
$f(Tx) \leq f(x)$.
\end{definition}

It follows readily from the previous 
definitions that the existence of internal 
metric functionals which are subinvariant and 
the existence of fixed points coincide for 
nonexpansive mappings. We record a precise
statement below.

\begin{proposition}
\label{proppu1}  
Given a mapping $T: E \to E$, the 
following holds: 
\begin{enumerate}
\item If an internal metric functional 
    $\,\h_z \in E^\diamondsuit$ is subinvariant 
    for $T$ then $z\in E$ is a fixed point 
    of $T$.
\item If $T$ is nonexpansive and has a fixed 
    point $z\in E$ then the internal metric 
    functional $\,\h_z \in E^\diamondsuit$ is 
    subinvariant for $T$.
\end{enumerate}
\end{proposition}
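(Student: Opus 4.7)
The plan is to unwind the definition of the internal metric functional $\h_z(x)=d(x,z)-d(o,z)$ in both directions and then apply each hypothesis in turn.

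For part (1), I would start from the subinvariance inequality $\h_z(Tx)\leq \h_z(x)$ for all $x\in E$. Substituting the explicit formula for $\h_z$, the constants $-d(o,z)$ cancel on both sides, leaving $d(Tx,z)\leq d(x,z)$ for every $x\in E$. The key step is then to specialize $x=z$, which gives $d(Tz,z)\leq d(z,z)=0$, forcing $Tz=z$. Note that this part does not require $T$ to be nonexpansive; the conclusion follows purely from subinvariance of $\h_z$.

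For part (2), the hypotheses are reversed: $T$ is nonexpansive and $Tz=z$. To verify $\h_z(Tx)\leq \h_z(x)$ for arbitrary $x\in E$, I would again cancel $d(o,z)$ and reduce the inequality to $d(Tx,z)\leq d(x,z)$. Using $z=Tz$ on the left hand side, I rewrite this as $d(Tx,Tz)\leq d(x,z)$, which is exactly the nonexpansiveness of $T$ applied to the pair $(x,z)$.

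There is no real obstacle here; the proposition is essentially a bookkeeping check that the cancellation $\h_z(y)-\h_z(z)=d(y,z)$ makes subinvariance of $\h_z$ equivalent, on the one hand, to the distance to $z$ being nonincreasing under $T$, and on the other hand, to $z$ being a fixed point combined with nonexpansiveness. The only thing worth flagging in the write-up is the asymmetry: part (1) uses only the evaluation of the inequality at the single point $x=z$, whereas part (2) genuinely requires the full nonexpansive inequality over all $x$.
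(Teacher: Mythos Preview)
Your proof is correct and is exactly the direct verification from the definitions that the paper has in mind; the paper does not give an explicit proof but simply remarks that the statement ``follows readily from the previous definitions,'' which is precisely the unwinding you carry out. Your observation about the asymmetry between the two parts (part~(1) needing only $x=z$, part~(2) needing nonexpansiveness for all $x$) is accurate and worth keeping.
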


We may face different situations with 
subinvariant metric functionals that are not 
internal. To begin with, some spaces have the 
metric functional that vanishes everywhere, 
and hence it is subinvariant for all mappings. 
Secondly, there may be subinvariant metric 
functionals, which are not internal but 
provide the existence of fixed points. And 
finally, if for some metric functional we 
would have $\h(Tx) < \h(x)$ for all $x\in E$, 
then $T$ cannot have a fixed point in $E$. 

\section{Problem statement}

In what follows we will consider  
nonexpansive mappings defined on Banach 
spaces or their subsets equipped with the 
same norm. To be precise, we want to study 
the following problem.

\begin{mdframed}
\begin{problem}
Let $X$ be a subset of a Banach space $E$ and 
let $\F$ be a commuting family of 
nonexpansive mappings from $X$ to itself. 
Find a metric functional 
$\h\in X^\diamondsuit$ that is subinvariant
for all $T\in\F$.    
\end{problem}  
\end{mdframed}

Next we present a simple example that fits 
our problem statement and is related to the 
applications mentioned in the Introduction.

\begin{example}
Let us assume that $A:E\to E$ is a 
linear operator and $b$ is some point in $E$. 
For $\mu\in \R$ let us consider the mapping 
$T_\mu : E\to E$ defined by the formula 
$T_\mu x = (1 - \mu)x + \mu(Ax + b)$ for all
$x \in E$. We can show that for all 
$\mu,\nu \in \R$ and for all $x \in E$ we have
$$
T_{\mu}T_{\nu}x = (1 - \mu)(1 - \nu)x 
    + (\nu - 2\mu \nu + \mu)Ax 
    + \mu \nu A(Ax + b) + (\nu - \mu \nu + \mu)b.
$$
Therefore, we have 
$T_\mu T_\nu = T_\nu T_\mu$. Now, if we 
assume that $\norm{Ax}\leq \norm{x}$ for all 
$ x\in E$ then 
$\F = \{ T_\mu\,\mid\,0 < \mu < 1 \}$ is a 
commuting family of nonexpansive mappings. 

By a result of O. Nevanlinna 
\cite[Proposition~1.4.2]{ONevanlinna1993},
we can extend the previous example as 
follows: let $\{q_s\}$ be a family of 
nonvanishing polynomials and set 
$$
p_s (\lambda) = 1 - (1 - \lambda)q_s (\lambda).
$$ 
Consider the affine mappings 
$T_s x = p_s(A) x + q_s(A)b$ where $A$ is a 
bounded linear operator. Then 
$\mathcal F=\{T_s\}$ is a commuting family. 
Depending on the spectrum $\sigma(A)$ it is 
possible that $\norm{p_s(A)} \leq 1$. 
Collecting all such pairs $q_s, p_s$ we have 
a commuting family of affine nonexpansive 
mappings.  
\end{example}

Before stating our main results, we want to
present below another example that motivates 
our study.  

\begin{example}\label{ex:1}
Let us assume that $E$ is any of the Banach 
spaces $c_0$, $\ell_1$, $\ell_2$, 
$\ell_\infty$ consisting of real sequences
$x = (x_k)_{k\geq 1}$. Let us consider the 
mapping $T: E\to E$ given by the formula
$$
T(x_1, x_2, x_3,\dots) 
    = (1, x_1, x_2, x_3,\dots ).
$$
It is clear that $T$ is both affine and 
isometric. From purely algebraic 
considerations, we observe that the only 
sequence which is mapped back to itself is 
$z=(1,1,1, \dots)$. Hence, $T$ has a unique 
fixed point in the space $E=\ell_\infty$ and 
$\h_z$ is a unique subinvariant metric 
functional. Now, $z$ is not in any of the 
smaller spaces, but we can construct 
subinvariant metric functionals for $T$ by 
considering the ``truncated" vectors 
$a_n=T^n0$ and taking limits of $\h_{a_n}$.
We will show in 
Proposition~\ref{prop:l1-l2claims} that if 
$E=\ell_2$ then the \emph{only} metric 
functional that is subinvariant for $T$ 
vanishes identically. Since there is no other 
subinvariant metric functional, $T$ has no 
fixed points in $\ell_2$. In the same 
proposition we will show also that if 
$E= \ell_1$ then there are infinitely many 
metric functionals that are subinvariant for 
$T$, all of them nontrivial. For example, the 
unbounded sequence $(a_n)$ in $\ell_1$ 
generates the metric functional
$$
\h(x) = \sum_{k=1}^\infty (\abs{x_k - 1} -1),  
$$ 
which is subinvariant for $T$. In fact, we 
have $\h(Tx) = \h(x) - 1$ for all 
$x\in\ell_1$, and hence $T$ has no fixed 
points in $\ell_1$. Finally, if $E=c_0$ then 
the sequence $(a_n)$ in $c_0$ generates the 
metric functional 
$$
\h(x) = \sup_{k\ge 1} \abs{x_k - 1} - 1,
$$ 
which is subinvariant for $T$.
\end{example}   
    
\begin{remark}
A theorem of Gaubert and Vigeral  
\cite{Gaubert-Vigeral-2012} implies the 
existence of a metric functional that is 
subinvariant for each element of the family 
$\F = \{ T^n \mid n\geq 1\}$, where $T:X\to X$ 
is nonexpansive and $X$ is a star-shaped 
subset of $E$. Karlsson showed a similar 
result in \cite[Proposition~13]{Karlsson2023}. 
Their approaches, however, do not seem to be 
adaptable for general commuting 
families of nonexpansive mappings. 
\end{remark}

\section{Main results}

Our main results are stated next and their 
proofs are given in Section~\ref{sec:proofs}.
For a mapping $T:X\to X$ we denote by $m(T,X)$
the nonnegative real number 
$\inf\{\norm{x-Tx}\;\mid\;x\in X\}$.

\begin{theorem}\label{thm:1}
Let $E$ be a Banach space and let $\F$ be a 
commuting family of affine nonexpansive 
mappings from $E$ to itself. If $X$ is a
nonempty convex subset of $E$ with the 
property that for all $T\in\F$ we have 
$TX \subset X$ and $m(T,X) = 0$, then
there exists a metric functional 
$\,\h\in X^\diamondsuit$ such that for all 
$x\in E$ and for all $T\in\F$ we have
$$
    \h(Tx) \leq \h(x).      
$$
In other words, $\h$ is a subinvariant metric
functional for all $T\in\F$.
\end{theorem}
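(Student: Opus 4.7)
The plan is a two-step argument: first construct common approximate fixed points for every finite subfamily of $\F$ with arbitrary tolerance, then extract a cluster point of the corresponding internal metric functionals using compactness of $X^\diamondsuit$, and verify that the cluster point is subinvariant for every $T\in\F$.

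For the first step, the key claim is that for every finite $\mathcal G=\{T_1,\dots,T_k\}\subset \F$ and every $\varepsilon>0$ there exists $x\in X$ with $\norm{T_i x-x}<\varepsilon$ for all $i=1,\dots,k$. I would prove this by induction on $k$. The base case is just the hypothesis $m(T_1,X)=0$. For the inductive step, given $y\in X$ with $\norm{T_i y-y}<\varepsilon$ for $i<k$, I form the Cesàro averages
$$
z_n=\frac{1}{n}\sum_{j=0}^{n-1} T_k^{\,j} y,
$$
which lie in $X$ because $X$ is convex and $T_k X\subset X$. Since each $T_i$ is affine and commutes with $T_k$, a direct computation gives
$$
T_i z_n - z_n=\frac{1}{n}\sum_{j=0}^{n-1}\bigl(T_k^{\,j}T_i y-T_k^{\,j} y\bigr),
$$
and nonexpansiveness of each $T_k^{\,j}$ yields $\norm{T_i z_n-z_n}\le \norm{T_i y-y}<\varepsilon$ for every $i<k$. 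On the other hand, $T_k z_n-z_n=\tfrac{1}{n}(T_k^{\,n}y-y)$. To bound this sublinearly in $n$, I use $m(T_k,X)=0$ to pick $\tilde y\in X$ with $\norm{T_k\tilde y-\tilde y\,}<\delta$; then $\norm{T_k^{\,n}\tilde y-\tilde y\,}\le n\delta$ and nonexpansiveness of $T_k^{\,n}$ give
$$
\frac{1}{n}\norm{T_k^{\,n}y-y}\le \frac{2\norm{y-\tilde y\,}}{n}+\delta,
$$
so $\liminf_n\norm{T_k z_n-z_n}=0$, and choosing $n$ large enough completes the induction.

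For the second step, let $\Lambda$ be the directed set of pairs $(\mathcal G,\varepsilon)$ with $\mathcal G\subset\F$ finite and $\varepsilon>0$, ordered by $(\mathcal G,\varepsilon)\preceq (\mathcal G',\varepsilon')$ iff $\mathcal G\subset \mathcal G'$ and $\varepsilon'\le\varepsilon$. By compactness and Hausdorffness of $X^\diamondsuit$, the net $(\h_{x_{\mathcal G,\varepsilon}})_\Lambda$ has a convergent subnet with limit $\h\in X^\diamondsuit$. Now fix any $T\in\F$ and $x\in E$. For every index $(\mathcal G,\varepsilon)$ with $T\in\mathcal G$, which is cofinal in $\Lambda$, nonexpansiveness of $T$ applied to $x$ and $x_{\mathcal G,\varepsilon}$ gives
$$
\h_{x_{\mathcal G,\varepsilon}}(Tx)-\h_{x_{\mathcal G,\varepsilon}}(x)=\norm{Tx-x_{\mathcal G,\varepsilon}}-\norm{x-x_{\mathcal G,\varepsilon}}\le \norm{Tx_{\mathcal G,\varepsilon}-x_{\mathcal G,\varepsilon}}<\varepsilon.
$$
Passing to the subnet limit yields $\h(Tx)\le \h(x)$, as required; note that $\h\in X^\diamondsuit$ is evaluated on $E$ via the canonical inclusion $X^\diamondsuit\subset E^\diamondsuit$ recorded after Karlsson's extension statement.

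The main obstacle is the inductive construction of common approximate fixed points. It is here that affineness is used in an essential way: without it, the identity $T_i z_n-z_n=\tfrac{1}{n}\sum_j(T_k^{\,j}T_i y-T_k^{\,j} y)$ fails, and Cesàro averaging under $T_k$ can destroy the approximate-fixed-point property for $T_1,\dots,T_{k-1}$. Once this combinatorial step is carried out, the rest is a straightforward compactness and continuity argument.
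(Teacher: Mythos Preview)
Your proof is correct and follows essentially the same strategy as the paper: build common approximate fixed points for finite subfamilies via iterated Ces\`aro averaging (using affineness, commutativity, and nonexpansiveness exactly as the paper does), then pass to a limit in the compact space of metric functionals. The only cosmetic differences are that you carry out the induction on $k$ explicitly and bound $n^{-1}\norm{T_k^{\,n}y-y}$ by an elementary approximate-fixed-point argument, whereas the paper sketches the two-map case, asserts the finite case follows, and invokes the Kohlberg--Neyman result (Remark~\ref{rem:1}) for the sublinear growth; and you use a single net indexed by $(\mathcal G,\varepsilon)$ where the paper packages the same compactness via the finite intersection property of the closed sets $M_T=\{\h:\h(Tx)\le\h(x)\}$.
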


\begin{remark}\label{rem:1}
Kohlberg and Neyman \cite{Kohlberg_Neyman1981} 
showed that every nonexpansive mapping 
$T:X \to X$ defined on a convex subset $X$ of 
a Banach space has the following property: 
for all $y \in X$ we have 
$$
\lim_{n\to\infty}\frac{\norm{T^{n}y}}{n}
    = \inf\{\norm{x-Tx}\;\mid\;x\in X\}.
$$
Thus, the assumption $m(T,X) = 0$ in 
Theorem~\ref{thm:1} holds whenever there is a 
vector $y\in X$ such that for all $T\in\F\,$ 
the limit shown above equals zero. This 
clearly holds for example when $X$ is bounded.  
\end{remark}

The following statement holds true for 
nonexpansive mappings, assuming neither 
affinity nor convexity, with the condition
that there is a vector $x_0$ such that 
\begin{equation}\label{melkeintarkei}
    \lim_{n\to\infty} 
    \norm{T^n x_0 - T^{n+1} x_0} = 0.
\end{equation}

\begin{theorem}\label{thm:2}
Let $E$ be a Banach space and let $\F$ be a 
commuting family of nonexpansive mappings 
from $E$ to itself. If $X$ is a nonempty 
subset of $E$ such that $T X \subset X$ for 
all $T\in\F$ and has a vector $x_0$ such that 
\textup{(\ref{melkeintarkei})} holds for all 
$T \in \F$, then there exists a metric 
functional $\,\h\in X^\diamondsuit$ that is 
subinvariant for all $\,T\in\F$.
\end{theorem}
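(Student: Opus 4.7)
The plan is to apply the finite intersection property in the compact space $X^{\diamondsuit}$. For each $T\in\F$, set
\[
K_T = \{\,\h\in X^{\diamondsuit}\;\mid\;\h(Tx)\leq\h(x)\ \text{for all}\ x\in E\,\}.
\]
Each inequality $\h(Tx)\leq\h(x)$ is a closed condition in the pointwise topology, so every $K_T$ is closed in the compact space $X^{\diamondsuit}$. To produce $\h\in\bigcap_{T\in\F}K_T$ it therefore suffices to show that every finite intersection $K_{T_1}\cap\dots\cap K_{T_k}$ is nonempty.

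Fix a finite commuting subfamily $\{T_1,\dots,T_k\}\subset\F$. By hypothesis \textup{(\ref{melkeintarkei})} applied to each $T_i$, for every $n$ one can choose integers $M_i(n)$ with $\norm{T_i^{M_i(n)+1}x_0 - T_i^{M_i(n)}x_0}<1/n$. Define
\[
v_n := T_1^{M_1(n)}T_2^{M_2(n)}\cdots T_k^{M_k(n)}x_0,
\]
which lies in $X$ since $x_0\in X$ and each $T_i$ maps $X$ into itself. Using commutativity, the factor $T_i^{M_i(n)}$ can be pulled out to give $v_n = S_i\,T_i^{M_i(n)}x_0$ and $T_iv_n = S_i\,T_i^{M_i(n)+1}x_0$, where $S_i$ is the composition of the remaining $T_j^{M_j(n)}$. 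Nonexpansiveness of $S_i$ then yields
\[
\norm{T_iv_n - v_n} = \norm{S_i T_i^{M_i(n)+1}x_0 - S_i T_i^{M_i(n)}x_0} \leq \norm{T_i^{M_i(n)+1}x_0 - T_i^{M_i(n)}x_0} < \tfrac{1}{n},
\]
so $\norm{T_iv_n - v_n}\to 0$ for every $i=1,\dots,k$.

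Now let $\h\in X^{\diamondsuit}$ be any cluster point of $(\h_{v_n})$, which exists by the compactness statement of Proposition~1. For any $y\in E$ and any $i$, the triangle inequality combined with the nonexpansiveness of $T_i$ gives
\[
\h_{v_n}(T_iy) - \h_{v_n}(y) = \norm{T_iy - v_n} - \norm{y - v_n} \leq \norm{T_iv_n - v_n} \longrightarrow 0.
\]
Passing to the limit along the relevant subnet yields $\h(T_iy)\leq\h(y)$, so $\h\in K_{T_1}\cap\dots\cap K_{T_k}$, proving the finite intersection is nonempty and closing the argument.

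The main obstacle is producing a single sequence in $X$ that is asymptotically regular for every member of a given finite subfamily. The decisive device is commutativity, which allows one to move a single factor $T_i^{M_i(n)}$ to the outside and thereby reduce the displacement $\norm{T_iv_n - v_n}$ to the one-variable quantity $\norm{T_i^{M_i(n)+1}x_0 - T_i^{M_i(n)}x_0}$ — precisely the scalar controlled by the hypothesis \textup{(\ref{melkeintarkei})}. Without commutativity the corresponding bound is lost and the construction breaks down, which is why the finite-intersection reduction and the product form of $v_n$ are both essential.
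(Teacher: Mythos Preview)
Your proof is correct and follows essentially the same route as the paper: for a finite subfamily one builds a sequence in $X$ by applying commuting powers of the chosen maps to $x_0$, uses commutativity plus nonexpansiveness to bound each displacement $\norm{T_i v_n - v_n}$ by the one-variable quantity from \textup{(\ref{melkeintarkei})}, passes to a cluster point in $X^{\diamondsuit}$, and then finishes with the finite intersection property. The only cosmetic difference is that the paper simply takes $v_n = T_1^{\,n}\cdots T_k^{\,n}x_0$ (since the hypothesis already gives $\norm{T_i^{\,n+1}x_0 - T_i^{\,n}x_0}\to 0$), whereas you extract indices $M_i(n)$ to force the $1/n$ bound explicitly; either choice works.
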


We emphasize that our results are valid 
without any compactness assumption as opposed 
to what is assumed in classical fixed-point 
theorems such as those obtained by Markov and 
Kakutani \cite{Markov36, Kakutani38}, 
DeMarr \cite{DeMarr1963}, and 
Browder \cite{Browder1965}. As it was 
indicated in Proposition~\ref{proppu1}, for 
every nonexpansive mapping $T$, the existence 
of internal metric functionals that are 
subinvariant for $T$ is equivalent to the 
existence of fixed points of $T$. Internal 
metric functionals appear naturally when $T$ 
maps a compact set to itself. 

Sometimes we may be able to conclude the 
existence of a fixed point even when the 
subinvariant metric functional is not 
internal. This happens for example in cases 
where one knows all the metric functionals on 
the considered space. Guti\'{e}rrez 
\cite{Gutierrez2019-2} showed explicit 
formulas for all the metric functionals on 
the $\ell_p$ spaces with $1 \leq p < \infty$, 
we recall those formulas in Section~\ref{sec:2}.  
Having these available, the existence of a 
common fixed point follows from 
Theorem~\ref{thm:1} as shown below.

\begin{corollary}\label{cor:1}  
Assume that $1\leq p<\infty$. Suppose that 
$\F$ is a commuting family of affine 
nonexpansive mappings from $\ell_{p}$ to 
itself. If there exists a nonempty bounded 
subset $B$ of $\,\ell_{p}$ such that 
for all $\,T\in\F$ we have $TB\subset B$, then 
the family $\F$ has a common fixed point
in $\ell_p$. More precisely, there exists a 
vector $z\in\ell_{p}$ such that for all 
$\,T\in\F\,$ we have $\,Tz=z$.
\end{corollary}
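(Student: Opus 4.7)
The plan is to reduce the statement to Theorem~\ref{thm:1} and then extract a fixed point from the resulting subinvariant metric functional using the explicit description of metric functionals on $\ell_p$ recalled in Section~\ref{sec:2}.

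First, I would replace the given bounded invariant set $B$ by its closed convex hull $X := \overline{\mathrm{conv}}(B)$ in $\ell_p$. Since each $T \in \F$ is affine and continuous (being nonexpansive), we have $T(\mathrm{conv}(B)) = \mathrm{conv}(TB) \subset \mathrm{conv}(B)$, and continuity extends this to $TX \subset X$. The set $X$ is thus a nonempty, closed, convex, bounded subset of $\ell_p$ invariant under every member of $\F$.

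Next, I would verify that $m(T, X) = 0$ for every $T \in \F$. By the Kohlberg--Neyman formula recalled in Remark~\ref{rem:1}, for any fixed $y \in X$ we have $m(T, X) = \lim_{n\to\infty} \norm{T^n y}/n$; since $T^n y$ remains in the bounded set $X$, this limit is zero. Theorem~\ref{thm:1} then produces a metric functional $\h \in X^{\diamondsuit} \subset \ell_p^{\diamondsuit}$ which is subinvariant for every $T \in \F$ on all of $\ell_p$, that is, $\h(Tx) \leq \h(x)$ for all $x \in \ell_p$ and all $T \in \F$.

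The final step, and the main obstacle, is to use the formulas from Section~\ref{sec:2} to pin down a point $z \in \ell_p$ at which $\h$ attains its infimum \emph{uniquely}. Once such a $z$ is found, subinvariance yields $\h(Tz) \leq \h(z)$ for each $T \in \F$, and uniqueness of the minimizer forces $Tz = z$, giving the desired common fixed point. This last step is delicate because a general metric functional on $\ell_p$ need not attain its infimum in $\ell_p$ at all, as illustrated by the $\ell_1$ functional $\h(x) = \sum_k(\abs{x_k - 1} - 1)$ from Example~\ref{ex:1}, whose ``minimizer'' $(1,1,1,\dots)$ lies outside $\ell_1$. What should rescue us is that our $\h$ arises as a limit of internal functionals $\h_{w_n}$ with $(w_n) \subset X$ bounded in $\ell_p$, and the classification of Section~\ref{sec:2} should then ensure that such a limit admits a genuine unique minimizer inside $\ell_p$, presumably via a strict convexity (or strict quasi-convexity) property of the explicit formula for $\h$.
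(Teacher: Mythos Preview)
Your approach is correct and essentially identical to the paper's: take the convex hull of $B$, apply Remark~\ref{rem:1} and Theorem~\ref{thm:1}, and then read off the fixed point from the explicit description of $\h$ for bounded $X$. The last step you flag as an obstacle is in fact immediate and needs no separate strict-convexity argument: because $X$ is bounded, the second clauses of Theorem~\ref{AG1} and the $\ell_1$ theorem give a specific $z\in\ell_p$ with $\h(x)$ a strictly increasing function of $\norm{x-z}_p$ (namely $\h(x)=(\norm{x-z}_p^p + c^p-\norm{z}_p^p)^{1/p}-c$ for $p>1$, and $\h=\h_z$ internal for $p=1$), so subinvariance becomes $\norm{Tx-z}_p\le\norm{x-z}_p$ for all $x$, and $x=z$ yields $Tz=z$.
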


\subsection{A fixed point theorem}

The fixed point $z$ shown in 
Corollary~\ref{cor:1} is not necessarily a 
point in the set $B$. We need additional 
properties if we insist on looking for a 
fixed point inside the given set. A key 
property that holds in all the $\ell_p$ 
spaces with $1\leq p<\infty$ is 
\emph{the Opial property} \cite{Opial1967}. 
We recall this property in 
Definition~\ref{Opial}.  
 
The following statement could serve as a 
model result to obtain a fixed point from 
a subinvariant metric functional.

\begin{theorem}\label{thm:3} 
Let $X$ be a nonempty weakly compact subset 
of a Banach space that has the Opial property. 
If a mapping $T:X\to X$ has a 
subinvariant metric functional 
then $T$ has a fixed point in $X$.
\end{theorem}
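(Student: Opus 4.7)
The plan is to show that any subinvariant metric functional $\h \in X^\diamondsuit$ attains a \emph{unique} minimum on $E$ at some point $z \in X$, and then to deduce $Tz = z$ directly from subinvariance.

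\textbf{Step 1: Finding the candidate $z$.} By definition, $\h = \lim_\alpha \h_{w_\alpha}$ pointwise on $E$ for some net $(w_\alpha) \subset X$. Since $X$ is weakly compact and hence bounded, I may pass to subnets to arrange $w_\alpha \rightharpoonup z$ weakly for some $z \in X$ and $\norm{o - w_\alpha} \to C$ for some constant $C \ge 0$. Combining these with $\h_{w_\alpha}(y) = \norm{y - w_\alpha} - \norm{o - w_\alpha}$ gives, for every $y \in E$,
\[
    \lim_\alpha \norm{y - w_\alpha}
    = \lim_\alpha \bigl(\h_{w_\alpha}(y) + \norm{o - w_\alpha}\bigr)
    = \h(y) + C.
\]

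\textbf{Step 2: Uniqueness via Opial.} For any $y \in E$ with $y \neq z$, the Opial property applied to $w_\alpha \rightharpoonup z$ should yield
\[
    \h(z) + C
    = \lim_\alpha \norm{w_\alpha - z}
    < \lim_\alpha \norm{w_\alpha - y}
    = \h(y) + C,
\]
so $\h(z) < \h(y)$. Thus $z$ is the unique minimizer of $\h$ on $E$, hence on $X$.

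\textbf{Step 3: Conclusion from subinvariance.} Subinvariance gives $\h(Tz) \leq \h(z)$; since $Tz \in X$ and $z$ is the unique minimizer, this forces $Tz = z$.

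The main technical obstacle I foresee is the Opial inequality in Step 2: the property is usually stated for sequences, while $\h$ is generated by a net. I would handle this either by verifying the net version of Opial directly (using weak lower semicontinuity of the norm together with the existence of the limits $\lim_\alpha \norm{w_\alpha - y}$ established in Step 1), or by invoking Eberlein--Smulian to extract from the net a weakly convergent sequence along which, by a diagonal argument applied to the three test points $o$, $z$, $y$, the relevant norm limits persist, so that sequential Opial applies. Everything else in the argument --- the weak extraction, the translation between $\h$ and norm limits, and the final uniqueness-plus-subinvariance step --- is essentially mechanical once this reduction is secured.
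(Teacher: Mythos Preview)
Your proposal is correct and follows essentially the same route as the paper. The paper factors your Steps~1--2 into a separate result (Proposition~\ref{prop:WeakCompUMP}, establishing that every weakly compact subset of an Opial space has the \emph{unique minimizer property}), and then the proof of Theorem~\ref{thm:3} is exactly your Step~3; the paper's proof of that proposition extracts a weakly convergent subsequence from the generating net just as you propose, and your explicit discussion of the net-versus-sequence issue via Eberlein--\v{S}mulian is in fact more careful than the paper's own treatment, which passes silently from net to subsequence.
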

 
\section{About metric functionals}\label{sec:2}

Metric functionals on Banach spaces were 
investigated for example in \cite{Walsh2007}, 
\cite{JS2017}, \cite{Walsh2018},
\cite{Gutierrez2019-1}, \cite{Gutierrez2019-2}, 
\cite{Gutierrez2020}, \cite{LP2023}, 
\cite{CKS2023}. 
We recall next explicit formulas for 
all metric functionals on the $\ell_p$ spaces 
with $1\leq p <\infty$, where the norm is
$$
\norm{x}_p=\Big(\sum_{k\geq 1}\abs{x_k}^p\Big)^{1/p}\ \ 
\text{for all}\ \ x=(x_k)_{k\geq 1}\in \ell_p.
$$

\begin{theorem} 
\textup{(\cite[Section~5]{Gutierrez2019-2})}
\label{AG1}
Assume that $1<p<\infty$. Then, each metric
functional $\,\h\in (\ell_{p})^\diamondsuit$ 
is either a continuous linear functional with 
norm at most $1$, or a functional of the form
\begin{equation}\label{eq:mf_lp}
\h(x) = \big(\norm{x - z}_p^p + c^p 
    -\norm{z}_p^p\big)^{1/p}-\,c 
\end{equation}
for some $z\in\ell_p$ 
and some $c\in\R$ with $c\geq \norm{z}_{p}$.

Moreover, if $X$ is a bounded subset of 
$\ell_p$ then each metric functional 
$\h\in X^\diamondsuit$ is of the form 
\textup{(\ref{eq:mf_lp})}.
\end{theorem}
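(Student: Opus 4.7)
The plan is to represent any $\h\in(\ell_p)^\diamondsuit$ as a pointwise limit of internal functionals $\h_{w_\alpha}$ along a net in $\ell_p$ and then, after passing to suitable subnets, identify the limit according to the behavior of $c_\alpha:=\norm{w_\alpha}_p$. By compactness of $[0,\infty]$ I may first arrange $c_\alpha\to c\in[0,\infty]$. The case $c<\infty$ should produce functionals of the form (\ref{eq:mf_lp}), while $c=\infty$ should produce continuous linear functionals of norm at most one; the ``moreover'' clause is then automatic, since a bounded $X$ forces $c_\alpha$ bounded and only the first case can occur.

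For the bounded regime I would use reflexivity of $\ell_p$ to extract a further subnet along which $w_\alpha\rightharpoonup z$ weakly for some $z\in\ell_p$; weak convergence in $\ell_p$ entails coordinatewise convergence, so $w_\alpha-z$ is bounded in $\ell_p$ and tends to zero at every coordinate. The essential analytic tool here is the Brezis--Lieb lemma (on $\N$ with counting measure), applied twice: once to $(w_\alpha)$ with limit $z$, giving $\norm{w_\alpha-z}_p^p\to c^p-\norm{z}_p^p$ (which incidentally forces $c\geq\norm{z}_p$), and once to $(w_\alpha-x)$ with limit $z-x$, giving $\norm{x-w_\alpha}_p^p-\norm{w_\alpha-z}_p^p\to\norm{x-z}_p^p$. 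Summing these yields $\norm{x-w_\alpha}_p^p\to\norm{x-z}_p^p+c^p-\norm{z}_p^p$, and taking $p$-th roots and subtracting $c$ recovers exactly (\ref{eq:mf_lp}).

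For the unbounded regime $c_\alpha\to\infty$, I would normalize $v_\alpha:=w_\alpha/c_\alpha$ so that $\norm{v_\alpha}_p=1$, and consider the dual vectors $\varphi_\alpha\in\ell_q$ (with $1/p+1/q=1$) defined coordinatewise by $(\varphi_\alpha)_k=\abs{(v_\alpha)_k}^{p-1}\operatorname{sgn}((v_\alpha)_k)$, which satisfy $\norm{\varphi_\alpha}_q=1$. Reflexivity of $\ell_q$ lets me pass to a subnet with $\varphi_\alpha\rightharpoonup\varphi\in\ell_q$ and $\norm{\varphi}_q\leq 1$. Starting from the identity $\norm{x-w_\alpha}_p^p-\norm{w_\alpha}_p^p=p\int_0^1\dpair{x}{\abs{tx-w_\alpha}^{p-1}\operatorname{sgn}(tx-w_\alpha)}\,dt$ obtained by differentiating $t\mapsto\norm{tx-w_\alpha}_p^p$, and combining it with the mean-value factorization $a-b=(a^p-b^p)/(p\,\theta^{p-1})$ for some $\theta$ between $a=\norm{x-w_\alpha}_p$ and $b=c_\alpha$, I expect the quotient to tend to $-\dpair{x}{\varphi}$, producing the desired linear functional.

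I anticipate the main technical obstacle to lie in this last step of the unbounded case: one has to show pointwise in $x$ that the mean-value denominator $\theta^{p-1}$ and the integrand both grow like $c_\alpha^{p-1}$ up to controlled lower-order terms, which amounts to a quantitative comparison between the duality-map values $J(w_\alpha-tx)$ and $J(w_\alpha)$ in $\ell_q$ as $c_\alpha\to\infty$, and then to transfer the weak convergence $\varphi_\alpha\rightharpoonup\varphi$ into a genuine pointwise limit for $\norm{x-w_\alpha}_p-c_\alpha$. The bounded regime, by contrast, is essentially a direct consequence of Brezis--Lieb once the weak limit $z$ has been extracted.
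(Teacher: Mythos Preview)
The paper does not itself prove Theorem~\ref{AG1}; it is quoted from \cite{Gutierrez2019-2}. It does, however, supply a proof of the unbounded half a few paragraphs later, in the broader setting of an arbitrary uniformly smooth Banach space, and that is the natural point of comparison.

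Your dichotomy $c_\alpha\to c<\infty$ versus $c_\alpha\to\infty$ is the right one, and your treatment of the bounded regime via two applications of the Brezis--Lieb lemma is correct and clean; it is essentially the argument of the cited source. (One minor caveat: Brezis--Lieb is usually stated for sequences. On $\ell_p(\N)$ the net version is immediate by splitting the sum at a finite index, or you can first pass to a subsequence using that the weak topology on bounded subsets of a separable reflexive space is metrizable.) For the unbounded regime your route is workable but heavier than necessary. The paper simply invokes the defining property of uniform smoothness,
\[
\frac{\norm{u+tv}-\norm{u}-\dpair{tv}{j(u)}}{\norm{tv}}\to 0\quad\text{uniformly for }u,v\in S_E,
\]
with $u=w_\alpha/c_\alpha$ and $tv=-x/c_\alpha$, to get $\h_{w_\alpha}(x)=c_\alpha\bigl(\norm{u+tv}-1\bigr)=-\dpair{x}{j(w_\alpha/c_\alpha)}+\norm{x}\,o(1)$ in one line; Banach--Alaoglu then furnishes a weak-$*$ limit point $-u^*$ of $(j(w_\alpha/c_\alpha))$ and $\h(x)=\dpair{x}{u^*}$. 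The ``quantitative comparison between $J(w_\alpha-tx)$ and $J(w_\alpha)$'' that you flag as the main obstacle is \emph{exactly} the norm-to-norm uniform continuity of the duality map on $S_E$, which is equivalent to uniform smoothness of $E$; so rather than establishing it by hand through your integral and mean-value identities, you may cite uniform smoothness of $\ell_p$ and collapse the argument to the display above. Your observation that the ``moreover'' clause follows automatically from the bounded-regime analysis is correct.
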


\begin{theorem}
\textup{(\cite[Section~3]{Gutierrez2019-2})}
We have $\,\h \in (\ell_{1})^\diamondsuit$ if
and only if there exists a subset $I$ of 
$\,\N$, an element $\varepsilon$ of 
$\,\{-1,1\}^I$, and an element $z$ of 
$\,\R^{\N\setminus I}$ such that for all 
$x \in \ell_1$ we have
\begin{equation} \label{eq:mf_l1}
\h(x) = \sum_{i\in I}\varepsilon_ix_i
    + \sum_{i\not\in I}(\abs{x_i-z_i}-\abs{z_i}).
\end{equation}

Moreover, if $X$ is a bounded subset of 
$\ell_{1}$ then each metric functional
$\,\h \in X^\diamondsuit$ is of the form 
$\,\h(x) = \norm{x - w}_{1} - \norm{w}_{1}\,$
for some $w\in\ell_1$.
\end{theorem}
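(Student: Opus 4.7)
The plan is to reduce the characterization to a coordinate-by-coordinate analysis, exploiting the splitting
\[
\h_w(x) = \sum_{i\geq 1}\bigl(\abs{x_i - w_i} - \abs{w_i}\bigr),
\]
in which each summand is $1$-Lipschitz in $x_i$, vanishes at $x_i=0$, and is therefore bounded in absolute value by $\abs{x_i}$. This uniform bound, combined with the $\ell_1$-summability of $x$, is what will let me interchange the pointwise limit with the series.

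For the easy direction, given data $(I,\varepsilon,z)$, I would construct an explicit sequence of finitely supported vectors $w^{(n)}\in\ell_1$ by setting $w^{(n)}_i = -n\varepsilon_i$ on $I\cap\{1,\dots,n\}$, $w^{(n)}_i = z_i$ on $\{1,\dots,n\}\setminus I$, and $w^{(n)}_i = 0$ otherwise. For $i\in I$ one has $\abs{x_i + n\varepsilon_i} - n = \varepsilon_i x_i$ as soon as $n>\abs{x_i}$, while for $i\notin I$ with $i\leq n$ the $i$-th summand already equals $\abs{x_i - z_i}-\abs{z_i}$. Together with the tail contribution $\sum_{i>n}\abs{x_i}\to 0$, this yields pointwise convergence of $\h_{w^{(n)}}$ to the functional displayed in $(\ref{eq:mf_l1})$, placing it in $(\ell_1)^\diamondsuit$.

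For the hard direction, take any net $(w_\alpha)$ in $\ell_1$ with $\h_{w_\alpha}\to\h$ pointwise, and write $w_{\alpha,i}$ for its $i$-th coordinate. Applying Tychonoff's theorem to the product $\prod_{i\geq 1}[-\infty,+\infty]$ lets me pass to a subnet so that $w_{\alpha,i}\to w^*_i\in[-\infty,+\infty]$ for every $i$. I then set $I=\{i:w^*_i\in\{\pm\infty\}\}$, $\varepsilon_i = -\mathrm{sign}(w^*_i)$ for $i\in I$, and $z_i = w^*_i$ for $i\notin I$; by direct inspection, each summand $\abs{x_i - w_{\alpha,i}}-\abs{w_{\alpha,i}}$ converges to the corresponding term of $(\ref{eq:mf_l1})$. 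The main obstacle, and the step I expect to need care, is promoting this coordinate-wise convergence to convergence of the full series. For this I would fix $x\in\ell_1$ and $\delta>0$, choose $N$ with $\sum_{i>N}\abs{x_i}<\delta$, use the uniform bound to dominate the tail $i>N$ independently of $\alpha$, and let the finite head $i\leq N$ pass to the limit termwise; sending $\delta\to 0$ identifies $\h$ with the formula in $(\ref{eq:mf_l1})$.

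For the bounded ``moreover'' part, I would use that any $\h\in X^\diamondsuit$ arises as a limit of $\h_{w_\alpha}$ with $w_\alpha\in X$, so $\norm{w_\alpha}_1\leq M$ for some fixed $M$. This forces $\abs{w_{\alpha,i}}\leq M$ for every $i$, so no coordinate can escape to $\pm\infty$ and $I$ must be empty. Fatou's lemma then yields $\sum_i\abs{w^*_i}\leq\liminf\norm{w_\alpha}_1\leq M$, so $w:=(w^*_i)\in\ell_1$, and $(\ref{eq:mf_l1})$ collapses to $\h(x) = \norm{x-w}_1-\norm{w}_1$, as required.
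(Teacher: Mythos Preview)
The paper does not prove this theorem; it is quoted verbatim from \cite[Section~3]{Gutierrez2019-2} and no argument is given here. So there is no in-paper proof to compare against.

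Evaluated on its own, your proposal is correct and is essentially the standard route: the coordinatewise splitting $\h_w(x)=\sum_i(\abs{x_i-w_i}-\abs{w_i})$ together with the uniform bound $\bigl|\abs{x_i-w_i}-\abs{w_i}\bigr|\le\abs{x_i}$, a compactness argument on $[-\infty,+\infty]^{\N}$ to extract coordinatewise limits $w^*_i$, and a head--tail (dominated convergence) argument with dominating sequence $(\abs{x_i})\in\ell_1$ to interchange limit and series. The construction for the ``if'' direction and the Fatou step for the bounded case are both correct. One small point worth making explicit when you write this up: after passing to the subnet in the hard direction, record that the subnet of $(\h_{w_\alpha})$ still converges pointwise to the same $\h$ (since subnets of convergent nets have the same limit), so the coordinatewise identification genuinely determines $\h$ and not merely some limit along the subnet.
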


Now we proceed to show the claims associated 
with the mapping $T$ introduced in 
Example \ref{ex:1}. 

\begin{proposition}\label{prop:l1-l2claims}
For the mapping $T:E\to E$ defined in \textup{Example~\ref{ex:1}} the following 
holds:
\begin{enumerate}
\item If $E=\ell_2$ then the only 
subinvariant metric functional for $T$ 
vanishes identically.
\item If $E=\ell_1$ then there are infinitely 
many subinvariant metric functionals for $T$, 
all of them nontrivial. 
\end{enumerate}
\end{proposition}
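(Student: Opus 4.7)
The plan is to invoke the explicit descriptions of metric functionals on $\ell_p$ recalled just above and check the subinvariance condition directly on each form.

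For part (1), Theorem~\ref{AG1} tells us that every $\h \in (\ell_2)^\diamondsuit$ is either a continuous linear functional of norm at most $1$, or is of the form (\ref{eq:mf_lp}). If $\h(x)=\sum_k a_k x_k$ with $(a_k)\in\ell_2$, then the condition $\h(Tx)\leq\h(x)$ rearranges to
\[
    a_1+\sum_{k\geq 1}(a_{k+1}-a_k)\,x_k \,\leq\, 0 \quad \text{for every } x\in\ell_2.
\]
Since a linear functional that is bounded above on a Banach space must vanish identically, this forces $a_{k+1}=a_k$ for all $k$; the only constant sequence in $\ell_2$ is zero, so $\h\equiv 0$. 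If instead $\h$ has the form (\ref{eq:mf_lp}) with parameters $z\in\ell_2$ and $c\geq\norm{z}_2$, then monotonicity of $t\mapsto\sqrt{t+c^2-\norm{z}_2^2}-c$ reduces subinvariance to $\norm{Tx-z}_2^2\leq\norm{x-z}_2^2$. Expanding the squares and using the telescoping identity $\sum_k(z_{k+1}^2-z_k^2)=-z_1^2$, valid because $z\in\ell_2$, this simplifies to
\[
    1-2z_1 \,\leq\, 2\sum_{k\geq 1} x_k(z_{k+1}-z_k) \quad \text{for every } x\in\ell_2.
\]
The same bounded-linear-functional argument forces $z_{k+1}=z_k$ for all $k$, hence $z=0$, and the inequality collapses to $1\leq 0$, a contradiction. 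Therefore only the zero functional is subinvariant.

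For part (2), I will exhibit the one-parameter family
\[
    \h_c(x)\,=\,\sum_{k\geq 1}(\abs{x_k-c}-\abs{c}), \qquad c\in[1/2,\infty).
\]
By the $\ell_1$ characterization with $I=\emptyset$ and the constant sequence $z\equiv c$, each $\h_c$ belongs to $(\ell_1)^\diamondsuit$. A direct calculation gives $\h_c(Tx)=\h_c(x)+(\abs{1-c}-\abs{c})$, and the additive constant is nonpositive exactly when $c\geq 1/2$. Evaluating on $-e_1$ gives $\h_c(-e_1)=\abs{1+c}-\abs{c}=1$, so each $\h_c$ is nontrivial; distinctness of the $\h_c$ follows by evaluating on sufficiently large positive multiples of $e_1$, where $\h_c(t\,e_1)=t-2c$ for $t>c$, exhibiting a genuine dependence on the parameter.

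The main obstacle is the algebraic simplification in the second $\ell_2$ case: one has to recognize that the difference $\norm{Tx-z}_2^2-\norm{x-z}_2^2$ reduces, after the $\ell_2$-specific telescoping cancellation, to an affine functional in $x$ whose boundedness above in turn forces both $z$ and the constant term to degenerate simultaneously. Once this reduction is in hand, the rest of the argument is routine, and the $\ell_1$ construction only requires verifying the stated identity on $\h_c(Tx)$.
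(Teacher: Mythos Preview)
Your argument is correct, and in part~(1) it is arguably cleaner than the paper's: for the linear case the paper produces explicit test vectors to violate $\langle x-Tx,z\rangle\geq 0$, whereas your ``affine functional bounded on one side must have zero linear part'' observation handles both the linear case and the form~(\ref{eq:mf_lp}) uniformly. For the form~(\ref{eq:mf_lp}) the paper instead notes that $\norm{Tx-z}\leq\norm{x-z}$ for all $x$ forces $Tz=z$ (just set $x=z$), contradicting the absence of fixed points---a one-line shortcut that avoids your expansion and telescoping, though your computation is perfectly valid. In part~(2) the paper exhibits a different countable family, namely $\h^{(N)}(x)=\sum_{j>N}(-x_j)+\sum_{j\leq N}(|x_j-1|-1)$, which satisfies the stronger inequality $\h^{(N)}(x)-\h^{(N)}(Tx)\geq 1$; your continuous family $\h_c$ with $c\geq 1/2$ works just as well and is equally legitimate under the characterization~(\ref{eq:mf_l1}).

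One small omission: the clause ``all of them nontrivial'' in the statement is meant to assert that \emph{every} subinvariant metric functional on $\ell_1$ is nontrivial, not merely the ones you exhibit. The paper handles this by observing at the outset that the zero functional is not a metric functional on $\ell_1$ at all (immediate from~(\ref{eq:mf_l1})). You should add this remark; otherwise your proof only shows that the particular $\h_c$ are nonzero.
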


\begin{proof}
    To show the first statement we use Theorem~\ref{AG1} as 
    follows. First, assume that any metric functional of the 
    form (\ref{eq:mf_lp}) is subinvariant for $T$. This 
    implies that there exists a vector $z\in \ell_2$ such 
    that $\norm{Tx-z}_2 \leq \norm{x-z}_2$ for all 
    $x\in \ell_2$, which is a contradiction as $T$ has no 
    fixed points in $\ell_2$. Thus, the remaining candidates 
    must be linear functionals $\h(x) = \dpair{x}{z}$ where
    $z\in\ell_2$ with $\norm{z}_2\leq 1$. Now, $\h$ is 
    subinvariant for $T$ if and only if $\dpair{x-Tx}{z} \geq 0$ 
    for all $x \in \ell_2$.  Denote $z= (z_1,z_2, \dots)$.  
    If $z_1\neq 0$, let $t=(z_1-1)/z_1$. Since $z\in\ell_2$,
    there exists an integer $n>1$ such that 
    $\abs{z_n} \leq 1/(2\abs{t}+1)$. Define a vector 
    $x=(x_1,x_2, \dots)\in \ell_2$ by $x_j=t$ for $j<n$ 
    and $x_j=0$ for $j\geq n$. Then $(x-Tx,z) \leq -1/2$, 
    and hence the functional $\h$ is not subinvariant. 
    Finally, if $m>1$ is the smallest index $j$ such that 
    $z_j \neq 0$, then consider $x=(x_1,x_2, \dots)\in \ell_2$ 
    such that $x_{m-1} = 1/z_m$ and $x_j=0$ for $j\neq m-1$. 
    Then $(x-Tx,z) = -1$.

    To show the second statement note first that the 
    $0-$functional is not a metric functional on $\ell_1$, 
    see (\ref{eq:mf_l1}). Now, for each integer $N\geq 1$ 
    let  
    $$
        \h^{(N)}(x) = \sum_{j=N+1}^\infty (-x_j) 
            + \sum_{j=1}^N (|x_j-1| - 1)\ \ 
            \text{for all}\ \ x\in\ell_1.
    $$
    Then $\h^{(N)}(x) - \h^{(N)}(Tx) = \abs{x_N -1} 
        + x_N \geq 1$.  
\end{proof}

Notice that if $T$ is nonexpansive and has a unique 
fixed point $z$, then the internal metric functional 
$\h_z$ need not be the only subinvariant functional 
for $T$. In fact, let $S$ be the forward shift in 
$\ell_2$, then with all $c\geq 0$ the functional 
$\h(x)= ( \norm{x}^2 + c^2)^{1/2} -\;c$ is invariant 
for $S$, see (\ref{eq:mf_lp}).

\subsection{Properties of metric functionals}

As we observed previously, there are 
metric spaces which have the metric functional 
that vanishes identically. It is therefore 
reasonable to determine conditions under which
a metric space has only nontrivial metric
functionals. To this end, we introduce
here some relevant concepts.
 
\begin{definition} 
A metric space $X$ has 
\emph{the zero-free property}, ZFP, if for 
each metric functional 
$\h \in X^\diamondsuit$ there is 
a point $x \in X$ such that $\h(x)\neq 0$.
\end{definition}

\begin{definition}
A metric space $X$ has  
\emph{the unique minimizer property}, UMP, 
if for each metric functional 
$\h \in X^\diamondsuit$ there is a point 
$a \in X$ such that for all 
$x\in X\setminus\{a\}$ we have 
$\h(a) < \h(x)$.
\end{definition}

The following implication is immediate.

\begin{proposition}    
If a metric space has UMP then it has 
ZFP.
\end{proposition}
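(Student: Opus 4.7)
The plan is a short unwinding of the two definitions. Given any metric functional $\h\in X^\diamondsuit$, UMP provides a minimizer $a\in X$ with $\h(a)<\h(x)$ for every $x\in X\setminus\{a\}$. I would argue by contradiction: if $\h$ vanished identically on $X$, then both $\h(a)=0$ and $\h(x)=0$ for any $x\neq a$, contradicting the strict inequality $\h(a)<\h(x)$. Hence at least one of $\h(a),\h(x)$ must be nonzero, which is exactly ZFP.

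The only subtle point is the degenerate case $\abs{X}=1$. If $X=\{a\}$, the quantifier ``for all $x\in X\setminus\{a\}$'' in the UMP definition is vacuous, yet the unique metric functional $\h_a$ is identically zero; so strictly speaking UMP does not imply ZFP for a one-point space. I would therefore either note at the outset that one may assume $X$ contains at least two points (the only case of interest, since otherwise $X^\diamondsuit=\{0\}$ trivially) or carry the case $\abs{X}\ge 2$ explicitly through the argument by fixing an arbitrary $x\neq a$ before comparing values.

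There is no genuine obstacle here beyond attending to that trivial case; the implication is a direct consequence of the definitions and does not require any property of the ambient metric or the topology on $X^\diamondsuit$.
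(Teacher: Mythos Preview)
Your argument is correct and is precisely what the paper intends: it records the implication as ``immediate'' and gives no further proof, so your direct unwinding of the definitions is exactly the expected reasoning. Your remark about the one-point space is a valid edge case that the paper tacitly ignores; assuming $\abs{X}\geq 2$ (as you suggest) is the appropriate fix.
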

  
We emphasize that ZFP and UMP depend truly on 
the metric. To show this, let us assume that 
$(A,d_A)$ and $(B,d_B)$ are metric spaces and 
let $X_p$ denote the direct sum $A\oplus B$   
equipped with different metrics $d_p$:
$$
d_p = \begin{cases}
        (d_A^p + d_B^p)^{1/p} \ \ 
        \text{for}\ \ 1\leq p < \infty, \\
        \max\{ d_A, d_B\}\ \ 
        \text{for}\ \ p = \infty.
    \end{cases}
$$

\begin{proposition}  
For the metric space $X_1$ the following 
properties hold:
\begin{enumerate}
\item We have 
$\,\h \in (X_1)^\diamondsuit\,$ if and only 
if there are two metric functionals 
$\,\h^A\in A^\diamondsuit\,$ and 
$\,\h^B\in B^\diamondsuit$ such that
$$
\h(x) = \h^A(a) + \h^B(b)\ \ 
\text{for all}\ \ x=(a,b)\in X_1. 
$$
\item The metric space $X_1$ has ZFP if and 
only if at least one of $A$ and $B$ has ZFP.
\item The metric space $X_1$ has UMP if and 
only if both $A$ and $B$ have UMP.
\end{enumerate}
\end{proposition}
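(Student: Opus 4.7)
The whole proposition hinges on the additivity of the metric: $d_1((a,b),(a',b')) = d_A(a,a') + d_B(b,b')$. With basepoint $o = (o_A, o_B)$, a direct computation gives the factorization
$$
\h_{(a',b')}(a,b) = \h^A_{a'}(a) + \h^B_{b'}(b)
$$
on internal functionals, which is exactly the content of part (1) on the dense subset. My plan is to push this identity to the closure in two directions. Forward: given a net $\h_{(a_\alpha, b_\alpha)} \to \h$ in $(X_1)^\diamondsuit$, use the compactness of $A^\diamondsuit$ and $B^\diamondsuit$ to pass to a subnet along which the components $\h^A_{a_\alpha}$ and $\h^B_{b_\alpha}$ converge pointwise to some $\h^A \in A^\diamondsuit$ and $\h^B \in B^\diamondsuit$; pointwise evaluation at $(a,b)$ then yields $\h(a,b) = \h^A(a) + \h^B(b)$. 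Backward: approximate arbitrary $\h^A,\h^B$ by internal functionals, and observe that the corresponding product-indexed net $\h_{(a_\alpha, b_\beta)}$ converges pointwise to $\h^A + \h^B$, so the latter lies in $(X_1)^\diamondsuit$.

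Once part (1) is established, parts (2) and (3) reduce to the factor spaces via the elementary observation that every metric functional vanishes at the basepoint: $\h^A(o_A) = 0 = \h^B(o_B)$, true for internal functionals by inspection and inherited under pointwise limits. Thus $\h(a, o_B) = \h^A(a)$ and $\h(o_A, b) = \h^B(b)$, so $\h$ vanishes identically on $X_1$ if and only if both $\h^A \equiv 0$ on $A$ and $\h^B \equiv 0$ on $B$. The ZFP equivalence then falls out immediately: failure of ZFP in both factors gives a trivial sum and failure of ZFP in $X_1$, while ZFP in either factor forces every $\h$ on $X_1$ to be nonzero along the corresponding slice $A \times \{o_B\}$ or $\{o_A\} \times B$.

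For UMP, the forward direction uses that the strict inequality $\h^A(a^*) < \h^A(a)$ (or analogously for $B$) propagates through addition, so unique minimizers in each factor combine to a unique minimizer $(a^*, b^*)$ of $\h$. The reverse direction is a partial-minimization argument: if $A$ fails UMP via some $\h^A$, pick any $\h^B \in B^\diamondsuit$ (nonempty by compactness) and consider $\h = \h^A + \h^B$; any candidate unique minimizer $(a^*, b^*)$ of $\h$, upon fixing $b = b^*$, would force $a^*$ to be a unique minimizer of $\h^A$, a contradiction. The main obstacle I anticipate is the subnet extraction in the forward direction of part (1): one must invoke compactness of the product $A^\diamondsuit \times B^\diamondsuit$ in the product of the pointwise topologies to extract simultaneously converging subnets of both components. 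Since these spaces need not be metrizable, a sequential diagonal argument does not suffice and one must genuinely work with nets.
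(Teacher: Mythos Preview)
Your proposal is correct and follows essentially the same approach as the paper: the additive splitting $\h_{(w,z)}(a,b)=\h^A_w(a)+\h^B_z(b)$ for internal functionals, passed to closures, then slicing through the basepoint $(o_A,o_B)$ for parts (2) and (3). The only difference is one of detail: the paper records the internal formula and declares part (1) to follow, whereas you spell out the two-directional net/compactness argument for the closure; your contrapositive phrasing of the reverse implication in (3) is logically the same partial-minimization as the paper's direct version.
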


\begin{proof}
Let us fix two points $a_0\in A$ and 
$b_0\in B$. 

The first statement follows from the 
formula
$$d_1((a,b),(w,z)) - d_1((a_0,b_0),(w,z)) 
    = \h_{w}^{A}(a)+ \h_{z}^{B}(b),$$ 
where $\h_{w}^{A}(a)=d_A(a,w)-d_A(a_0,w)$ 
and $\h_{z}^{B}(b)=d_B(b,z)-d_B(b_0,z)$.     

Let us consider the second statement. 
If both $A$ and $B$ have identically 
vanishing metric functionals, their sum 
also vanishes. Now, let us assume that $A$ 
has ZFP and consider a metric functional 
$\h = \h^A + \h^B$. We know that there exists
$\hat{a} \in A$ such that 
$\h^A(\hat{a})\neq 0$. Since $\h^B(b_0)=0$, 
we have 
$\h(\hat{a},b_0) = \h^A(\hat{a}) \neq 0$.
If $B$ has ZFP instead, then we have
$\h(a_0,\hat{b})\neq 0$ for some 
$\hat{b}\in B$.    

To show the last statement, let us assume 
first that both $A$ and $B$ have UMP. Then, 
for every metric functional $\h = \h^A + \h^B$
there are $\hat{a}\in A$ and $\hat{b}\in B$
such that $\h^A(\hat{a}) < \h^A(a)$ for all 
$a\neq \hat{a}$ and $\h^B(\hat{b}) < \h^B(b)$ 
for all $b\neq \hat{b}$. This implies that
$\h(\hat{a},\hat{b}) < \h(a,b)$ for all 
$(a,b) \neq (\hat{a},\hat{b})$. Now, let us 
assume that $X_1$ has UMP and consider two 
metric functionals $\h^A\in A^\diamondsuit$ 
and $\h^B\in B^\diamondsuit$. Since $X_1$ has
UMP, there are two points $\hat{a}\in A$ and 
$\hat{b}\in B$ such that 
$$
\h^A(\hat{a})+\h^B(\hat{b}) 
< \h^A(a)+\h^B(b)\ \ \text{for all}\ \ 
(a, b)\neq (\hat{a}, \hat{b}). 
$$
In particular, by evaluating the inequality
shown above at the point $(a,\hat{b})$ with 
$a\neq \hat{a}$, we have 
$\h^A(\hat{a}) < \h^A(a)$. If we consider
the point $(\hat{a},b)$ with $b\neq \hat{b}$ 
instead, we have $\h^B(\hat{b}) < \h^B(b)$.
\end{proof}

For $1<p\leq \infty$ the situation is quite 
different:

\begin{proposition} 
Consider now the metric space $X_p$ where 
$1<p\leq \infty$. Assume that $B$ is 
unbounded and has a metric functional 
$\,\h^B= \lim_n \h_n^B$ where 
$\,\h_n^B(b) = d_B(b, b_n) - d_B(b_0, b_n)$ 
with $d_B(b_0, b_n) \to \infty$. Then the 
mapping
$$(a,b) \mapsto \h^B(b)$$ 
is a metric functional on $X_p$.
\end{proposition}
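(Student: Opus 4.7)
The plan is to exhibit the candidate map $(a,b)\mapsto \h^B(b)$ as a pointwise limit of internal metric functionals on $X_p$, using the very sequence $(b_n)$ that produces $\h^B$. The natural base points to try in $X_p$ are $(a_0, b_n)$, and I would study the associated internal metric functionals
\[
\h_n(a,b) = d_p\bigl((a,b),(a_0,b_n)\bigr) - d_p\bigl((a_0,b_0),(a_0,b_n)\bigr),
\]
noting that regardless of $p\in(1,\infty]$ one has $d_p((a_0,b_0),(a_0,b_n)) = d_B(b_0,b_n)$. So after this reduction the task is to show that $d_p((a,b),(a_0,b_n)) - d_B(b_0,b_n)\to \h^B(b)$ for every fixed $(a,b)\in X_p$.

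First I would record that $d_B(b,b_n)\to\infty$ for each fixed $b$: this follows from $d_B(b,b_n)\ge d_B(b_0,b_n) - d_B(b_0,b)$ and the hypothesis $d_B(b_0,b_n)\to\infty$. The heart of the argument is then to show that the $A$-coordinate becomes asymptotically negligible, so that $d_p((a,b),(a_0,b_n)) - d_B(b,b_n)\to 0$; once this is established, writing
\[
\h_n(a,b) = \bigl[d_p((a,b),(a_0,b_n)) - d_B(b,b_n)\bigr] + \bigl[d_B(b,b_n) - d_B(b_0,b_n)\bigr]
\]
and using $\h_n^B(b)\to\h^B(b)$ on the second bracket gives the desired convergence.

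For $1<p<\infty$, I would factor
\[
d_p\bigl((a,b),(a_0,b_n)\bigr) = d_B(b,b_n)\Bigl(1 + \bigl(d_A(a,a_0)/d_B(b,b_n)\bigr)^p\Bigr)^{1/p},
\]
and use the elementary estimate $(1+t)^{1/p} - 1 \le t/p$ for $t\ge 0$ to bound the difference $d_p((a,b),(a_0,b_n)) - d_B(b,b_n)$ by a constant times $d_A(a,a_0)^p / d_B(b,b_n)^{p-1}$. Since $p-1>0$ and $d_B(b,b_n)\to\infty$, this tends to $0$. The case $p=\infty$ is easier: as soon as $n$ is large enough that $d_B(b,b_n)>d_A(a,a_0)$, one has $d_\infty((a,b),(a_0,b_n))=d_B(b,b_n)$ exactly, and the conclusion is immediate.

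The main obstacle, modest as it is, is Step 2 for $p<\infty$: the cancellation of the $A$-coordinate is precisely where the hypothesis $p>1$ is used. Indeed, if one tried the same argument for $p=1$, one would get $d_1((a,b),(a_0,b_n)) - d_B(b_0,b_n) = d_A(a,a_0) + [d_B(b,b_n)-d_B(b_0,b_n)]$, and the limit would carry an extra $d_A(a,a_0)$ term — consistent with the previous proposition that on $X_1$ metric functionals split as sums, and showing that the proposition genuinely fails at $p=1$.
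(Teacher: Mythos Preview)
Your proposal is correct and follows essentially the same approach as the paper: both use the internal metric functionals $\h_{(a_0,b_n)}$ and show they converge pointwise to $(a,b)\mapsto \h^B(b)$ by arguing that the $A$-component becomes negligible as $d_B(b_0,b_n)\to\infty$. Your version is simply more detailed, supplying the explicit estimate $d_p((a,b),(a_0,b_n)) - d_B(b,b_n) \le p^{-1} d_A(a,a_0)^p / d_B(b,b_n)^{p-1}$ where the paper just writes $o(1)$, and your closing remark on why the argument breaks down at $p=1$ is a nice addition.
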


\begin{proof}  
Let us fix $a_0\in A$ and consider internal 
metric functionals on $X_p$ of the form
$$
\h_{(a_0,b_n)}(a,b) = d_p((a,b),(a_0,b_n))
            - d_p((a_0,b_0),(a_0,b_n)).
$$  
Then for the case $1<p<\infty$ we have 
$$
\h_{(a_0, b_n)}(a,b) = \h^B(b) + o(1)\ \ 
    \text{as}\ \ d_B(b_0, b_n) \to \infty,
$$
and for the case $p=\infty$ we have
$$
\h_{(a_0, b_n)}(a,b) = 
\max\{d_A(a,a_0)-d_B(b_0,b_n),\,\h_n^B(b)\}.
$$
From the formulas shown above we conclude 
that $\h_{(a_0,b_n)}(a,b) \to \h^B(b)$ as 
$d_B(b_0, b_n) \to \infty$.
\end{proof}
 
\begin{example}  
Consider the space 
$X_p = \ell_1 \oplus \ell_2$, where $\ell_1$ 
and $\ell_2$ are equipped with their standard 
norms. Then, $X_1$ has ZFP while $X_p$ with 
$1 < p \leq \infty$ does not.
\end{example}

We will hereinafter consider properties
of metric functionals on subsets $X$ of 
Banach spaces $E$. The standard notations
$B_E$ and $S_E$ will denote respectively the 
closed unit ball of $E$ and the unit sphere 
of $E$.

\begin{proposition}
If a Banach space $E$ has ZFP then 
$X=\delta B_E$ has ZFP for all positive
real numbers $\delta$.
\end{proposition}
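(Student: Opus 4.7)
The plan is to argue by contrapositive: I assume $X=\delta B_E$ lacks ZFP and produce a metric functional on $E$ that vanishes identically, contradicting ZFP of $E$. Throughout, I fix the base point $o=0\in\delta B_E$.

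Suppose $\h\in X^\diamondsuit$ vanishes on $X$, and write $\h=\lim_\alpha \h_{w_\alpha}$ with $w_\alpha\in\delta B_E$. The first step is to exploit the scaling symmetry of balls to produce, for every $R>0$, a metric functional in $E^\diamondsuit$ that vanishes on $RB_E$. For each $\lambda>0$ the identity
\[
\h_{\lambda w_\alpha}(x)=\norm{x-\lambda w_\alpha}-\norm{\lambda w_\alpha}=\lambda\,\h_{w_\alpha}(x/\lambda)
\]
shows that whenever $x\in\lambda\delta B_E$, one has $x/\lambda\in\delta B_E$ and hence $\h_{\lambda w_\alpha}(x)\to 0$. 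Viewing the net $\{\h_{\lambda w_\alpha}\}$ inside the compact space $E^\diamondsuit$ and passing to a convergent subnet, I obtain, for each $R>0$ (take $\lambda=R/\delta$), a metric functional $\tilde{\h}^R\in E^\diamondsuit$ with $\tilde{\h}^R(x)=0$ for all $x\in RB_E$.

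The second step is a compactness argument inside $E^\diamondsuit$. The sequence $\{\tilde{\h}^n\}_{n\in\N}$ lies in the compact space $E^\diamondsuit$ and therefore has a cluster point $\h^*\in E^\diamondsuit$. For any fixed $x\in E$, once $n\geq\norm{x}$ we have $x\in nB_E$, so $\tilde{\h}^n(x)=0$; along the subnet defining $\h^*$ the values $\tilde{\h}^n(x)$ are eventually zero, forcing $\h^*(x)=0$. Since $x\in E$ was arbitrary, $\h^*\equiv 0$ on $E$, contradicting ZFP of $E$.

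The main obstacle is that one cannot simply appeal to the extension property recorded earlier to transfer vanishing of $\h$ from $X$ to $E$: any extension $\widetilde{\h}\in E^\diamondsuit$ of $\h$ automatically satisfies $\widetilde{\h}(y)\geq\norm{y}-2\delta$, so it is nonzero whenever $\norm{y}>2\delta$. The scaling step is the essential device that lets the hypothesis ``vanishes on $\delta B_E$'' propagate to vanishing on arbitrarily large balls, and only then does compactness of $E^\diamondsuit$ aggregate these into a single metric functional that vanishes on all of $E$.
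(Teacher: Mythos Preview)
Your proof is correct and follows essentially the same route as the paper: contrapositive, scaling identity $\h_{\lambda w_\alpha}(x)=\lambda\,\h_{w_\alpha}(x/\lambda)$ to obtain metric functionals vanishing on arbitrarily large balls, then a final compactness pass in $E^\diamondsuit$. The only cosmetic difference is that the paper observes the scaled net actually converges on all of $E$ to $m\h(m^{-1}v)$ (since $\h$ is already regarded as an element of $E^\diamondsuit$), so no subnet is needed at that intermediate step.
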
 
 
\begin{proof}  
This follows from the compactness of 
$E^\diamondsuit$ and the scaling property of 
the linear space $E$. 
Concretely, let us assume that 
$X=\delta B_E$ does not have ZFP for some 
$\delta >0$. Then there exists a metric 
functional $\h\in X^\diamondsuit$ that 
vanishes identically in $X$. We know that 
there is a net $(a_\alpha)$ in $X$ such 
that $\h(v)=\lim_\alpha \h_\alpha(v)$ where
$\h_\alpha(v) = \norm{v-a_\alpha}
-\norm{a_\alpha}$ for all $v\in E$.  
Now, for each integer $m\geq 1$ let us 
consider the internal metric functionals 
$$
\h_\alpha^{m}(v) = \norm{v- m a_\alpha} 
    - \norm{m a_\alpha} 
    = m \h_\alpha(m^{-1} v).
$$
For fixed $m\geq 1$ and $v\in E$, the 
limit $\lim_{\alpha} \h_\alpha^{m}(v)$ exists
and equals $m\h(m^{-1} v)$. By compactness of
$E^\diamondsuit$, the aforementioned limit 
determines a metric functional 
$\h^m\in E^\diamondsuit$ that vanishes 
identically in the ball $mX$. Thus, for all
$v\in E$ we have $\h^m(v)\to 0$ as 
$m\to\infty$. This limit determines precisely 
the metric functional vanishing identically 
in the whole $E$, because $E^\diamondsuit$ is 
compact.
\end{proof}

A simple modification of the previous proof 
gives the following. 

\begin{proposition} 
Let us assume that $X$ is a cone of a 
Banach space $E$, meaning $tx \in X$ for all 
$x\in X$ and for all $t>0$. If there exists a 
metric functional $\,\h \in X^\diamondsuit\,$ 
vanishing identically in $X \cap \delta B_E$ 
for some $\delta>0$, then $X$ does not have 
ZFP.
\end{proposition}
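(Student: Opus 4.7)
My plan is to adapt the proof of the preceding proposition, replacing the scaling-within-a-ball argument by a scaling-along-the-cone argument. Choose the basepoint $o=0$ and write $\h = \lim_\alpha \h_{a_\alpha}$ pointwise on $E$ for some net $(a_\alpha)$ in $X$. For each integer $m\geq 1$ I would consider the internal metric functionals $\h_{m a_\alpha}$; the key observation is that these genuinely belong to $X^\diamondsuit$, because the cone property ensures $m a_\alpha \in X$. This is the one and only place where the cone hypothesis enters, and it plays the role that the linear-space structure of $E$ itself played in the previous proof.

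The identity
\[
\h_{m a_\alpha}(v)
= \norm{v - m a_\alpha} - \norm{m a_\alpha}
= m\,\h_{a_\alpha}(m^{-1} v)
\]
then lets me pass to $\alpha$-limits, yielding a functional $\h^m \in X^\diamondsuit$ satisfying $\h^m(v) = m\,\h(m^{-1} v)$ for every $v \in E$. Next I would vary $m$: for a fixed $v \in X$, the cone property gives $m^{-1} v \in X$, and $\norm{m^{-1} v} \leq \delta$ as soon as $m \geq \norm{v}/\delta$. Hence for all sufficiently large $m$ we have $m^{-1} v \in X \cap \delta B_E$, so by hypothesis $\h(m^{-1} v) = 0$ and consequently $\h^m(v) = 0$. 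Thus $\h^m \to 0$ pointwise on $X$ as $m \to \infty$, and by the compactness of $X^\diamondsuit$ the zero functional lies in $X^\diamondsuit$. Therefore $X$ does not have ZFP.

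I do not anticipate a serious obstacle here; the main thing to verify carefully is that each scaled candidate remains inside $X^\diamondsuit$ rather than merely inside the ambient $E^\diamondsuit$. The cone assumption is exactly what makes this step go through at two moments: first, to ensure $m a_\alpha \in X$ so that the approximating internal functionals belong to $X^\diamondsuit$; and second, to ensure $m^{-1} v \in X$ so that the hypothesis on $\h$ can actually be invoked.
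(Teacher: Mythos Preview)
Your proposal is correct and is precisely the ``simple modification of the previous proof'' that the paper alludes to without writing out: you replace the ambient space $E$ by the cone $X$ and use the cone hypothesis at the two points you identify (to keep $ma_\alpha$ and $m^{-1}v$ inside $X$). One cosmetic remark: in your final step you only need that $X^\diamondsuit$ is \emph{closed} in $\R^X$, not compact, since you have already exhibited an actual pointwise limit $\h^m \to 0$ on $X$ rather than merely a limit point.
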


Before we state more properties of metric 
functionals, let us recall some standard 
notations used in Banach space theory.
For a given Banach space $E$ we will denote 
by $E^*$ the set of all continuous linear 
functionals on $E$. We know that $E^*$
becomes a Banach space itself when we equip it
with the operator norm. To simplify our 
exposition, we will use the same notation to
denote both the norm on $E$ and the operator 
norm on $E^*$. For a nonzero vector 
$x \in E$ we denote by $\partial \norm{x}$ 
the set of subdifferentials:
$$
    \partial \norm{x} = \{ f \in E^* 
    \mid \dpair{x}{f} 
    = \norm{x},\ \norm{f} = 1 \}
$$ 
and by $j(x)$ the set of dual vectors:
$$
    j(x) = \{ f \in E^* \mid \dpair{x}{f} 
        = \norm{x}^2,\ \norm{f} 
        = \norm{x} \}.
$$

We show below that every subset of Banach 
space containing a ray has a metric 
functional without lower bounds. In this 
context, a metric functional associated with 
a ray is sometimes called a 
\emph{Busemann function}.

\begin{proposition}\label{prop:Busemann} 
Let $X$ be a subset of a Banach space $E$ 
such that there exists a vector $u\in S_E$ 
with the property that $tu\in X$ for all 
$t \geq 0$. Then, for all $x\in E$ we have
$$
\lim_{t\to \infty} (\norm{x- tu} - t) 
    = \max \{ -\dpair{x}{f} \mid f \in 
    \partial \norm{u} \}.
$$
This limit determines a metric functional 
$\,\h\in X^\diamondsuit$ such that 
$\,\h(su) = -s$ for all $s\geq 0$.
\end{proposition}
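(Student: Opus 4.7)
The strategy is to sandwich $\phi(t) := \norm{x-tu}-t$ between the two sides of the claimed equality by a duality argument, using that $\partial\norm{u}$ is nonempty and weak-$*$ compact, and then push the pointwise limit through the closure definition of $X^\diamondsuit$.

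For the lower bound I would fix any $f \in \partial\norm{u}$ and use $\norm{x-tu} \geq \langle tu - x, f\rangle = t - \langle x, f\rangle$, giving $\phi(t) \geq -\langle x, f\rangle$ for every $t \geq 0$. Taking the supremum over $f \in \partial\norm{u}$, and noting that this supremum is attained because $\partial\norm{u}$ is weak-$*$ closed in $B_{E^*}$ (hence weak-$*$ compact) and $f \mapsto -\langle x, f\rangle$ is weak-$*$ continuous, yields $\liminf_{t\to\infty} \phi(t) \geq \max\{-\langle x, f\rangle : f \in \partial\norm{u}\}$.

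For the upper bound, for each $t > 0$ large enough I would use Hahn--Banach to pick $f_t \in S_{E^*}$ with $\langle tu-x, f_t\rangle = \norm{tu-x}$, so that $\phi(t) = t(\langle u, f_t\rangle - 1) - \langle x, f_t\rangle \leq -\langle x, f_t\rangle$ since $\langle u, f_t\rangle \leq \norm{u}\norm{f_t} = 1$. Choosing $t_n \to \infty$ with $\phi(t_n) \to \limsup_{t\to\infty} \phi(t)$, the identity above also forces $\langle u, f_{t_n}\rangle \to 1$, because $\phi(t_n)$ and $\langle x, f_{t_n}\rangle$ remain bounded (by $\norm{x}$, via the triangle inequality). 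By Banach--Alaoglu, a weak-$*$ cluster point $f_\infty$ of $(f_{t_n})$ satisfies $\norm{f_\infty} \leq 1$ and $\langle u, f_\infty\rangle = 1$, which together force $f_\infty \in \partial\norm{u}$. Passing to a suitable subnet we conclude $\limsup_{t\to\infty} \phi(t) \leq -\langle x, f_\infty\rangle \leq \max\{-\langle x, f\rangle : f \in \partial\norm{u}\}$. Combined with the lower bound, the limit exists and equals the claimed maximum.

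The main obstacle is this upper bound step: the dual unit ball need not be weak-$*$ metrizable, so the extraction of $f_\infty$ must be done with subnets rather than subsequences, and one must carefully verify that the normalization $\langle u, f_{t_n}\rangle \to 1$ survives the passage to a subnet so that the cluster point genuinely lies in $\partial\norm{u}$ and not merely in $B_{E^*}$. Once the pointwise limit is in hand, the metric-functional claim is formal: the functions $\h_{tu}(x) = \norm{x-tu}-\norm{tu} = \norm{x-tu}-t$ are internal metric functionals on $X$ (since $tu \in X$ for all $t \geq 0$), they converge pointwise on $E$ as $t \to \infty$, and $X^\diamondsuit$ is closed in $\R^E$ under pointwise convergence, so the limit $\h$ lies in $X^\diamondsuit$. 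Finally, evaluating at $x = su$ with $s \geq 0$ gives $\h(su) = \max\{-s\langle u,f\rangle : f \in \partial\norm{u}\} = -s$, because every $f \in \partial\norm{u}$ satisfies $\langle u, f\rangle = 1$.
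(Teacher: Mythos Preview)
Your argument is correct. The lower bound via the subdifferential inequality matches the paper; the difference is in how you obtain existence of the limit and the matching upper bound. The paper proceeds more elementarily: it observes that $t\mapsto \norm{x-tu}-t$ is non-increasing (by the triangle inequality), so the limit exists once one has the lower bound, and it then identifies the limit by reducing to the two-dimensional subspace spanned by $u$ and $x$, where compactness is automatic and no Banach--Alaoglu is needed. Your route is instead a duality sandwich in the full space: you produce a norming functional $f_t\in S_{E^*}$ for $tu-x$, force $\langle u,f_{t_n}\rangle\to 1$ from boundedness, and then extract a weak-$*$ cluster point in $\partial\norm{u}$ via Banach--Alaoglu. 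This is a genuinely different mechanism: the paper's monotonicity observation gives the limit's existence for free and keeps the argument finite-dimensional, while your approach is more functional-analytic and arguably spells out the upper bound more completely than the paper's rather terse ``the maximum is obtained in the limit.'' Both approaches yield the same conclusion, and your treatment of the metric-functional part and the evaluation $\h(su)=-s$ is correct and essentially the same as the paper's.
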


\begin{proof}  
 
This follows immediately from the definition 
of the set $\partial \norm{u}$.  In fact, if
we fix $x\in E$ then for all 
$f\in \partial \norm{u}$ and for all $t>0$
we have
\begin{equation}\label{subd}
\norm{t u - x} \ge \norm{t u} + \dpair{-x}{f}.
\end{equation} 
The limit exists as it takes place in a 
two-dimensional subspace spanned by $u$ and 
$x$. We notice also that the real-valued 
function $t\mapsto \norm{x - tu} - \norm{t u}$ 
is non-increasing. As (\ref{subd}) holds for 
all $f$ in the norm-closed set 
$\partial\norm{u}$, the maximum is obtained 
in the limit. 
\end{proof}

\begin{example}  
Let us consider 
$X=\{n e_n\}_{n\in \mathbb Z}$ as a subset of 
the Banach space $c_0$, both equipped with the
sup-norm. Thus, $X$ is unbounded but does not 
contain a ray. We notice that each metric 
functional $\h \in X^\diamondsuit$ is either 
internal or the zero functional.
\end{example}

\begin{remark}
If the unit sphere $S_E$ is smooth, the set 
$\partial \norm{u}$ contains only one element, 
say $u^*$. The corresponding metric functional 
obtained in Proposition~\ref{prop:Busemann} 
becomes 
$$
\h(x) = - \dpair{x}{u^*}.
$$
\end{remark}

\begin{example} \label{1normi} 
Let us equip $\mathbb R^2$ with the norm 
$\norm{x} = \abs{x_1} + \abs{x_2}$ and fix 
the point $u=(1, 0 )^t$ to determine the ray 
so that 
$\partial\norm{u} = \{(1,\eta) \mid  |\eta | \le 1 \}$ 
and 
$\max_{|\eta |\le 1}(-x_1- \eta x_2) = -x_1 +  |x_2|.$ 
If we perturb the unit ball a little so that 
it becomes uniformly convex but still has 
corners with a little bit larger angle, then 
with some $0<\alpha <1$, we obtain likewise 
$\h(x)= - x_1 +\alpha |x_2|$.   
\end{example} 

The following result was shown in 
\cite[Section~5]{Gutierrez2019-2}. We present 
here a different proof. 

\begin{theorem}  
Let $E$ be a uniformly smooth Banach space. 
If $\,\h\in E^\diamondsuit$ is a metric 
functional that arises from a net 
$(a_\alpha)$ in $E$ when 
$\norm{a_\alpha} \to \infty$,
then there exists $u^* \in B_{E^*}$ such that 
$\,\h(x)= \dpair{x}{u^*}$ for all $x\in E$.

Conversely, if $u^* \in B_{E^*}$ is given 
then there exists a sequence $(a_n)$ in $E$ 
such that $\,\h_{a_n}(x) \to \dpair{x}{u^*}\,$ 
for all $x \in E$.  
\end{theorem}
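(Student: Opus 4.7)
The plan is to write $a_\alpha = t_\alpha u_\alpha$ with $u_\alpha := a_\alpha/\norm{a_\alpha} \in S_E$ and $t_\alpha := \norm{a_\alpha} \to \infty$, and to exploit the quantitative Fr\'echet expansion of the norm provided by uniform smoothness. Concretely, the modulus $\rho_E$ satisfies $\rho_E(\tau)/\tau \to 0$ as $\tau \to 0^+$ and yields a uniform bound
\[
    \abs{\norm{u+h} - 1 - \dpair{h}{j(u)}} \leq 2\,\rho_E(\norm{h}) \quad (u \in S_E,\; h \in E),
\]
where $j(u) \in S_{E^*}$ is the Fr\'echet derivative of $\norm{\cdot}$ at $u$. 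Substituting $h = -x/t_\alpha$ and multiplying by $t_\alpha$ gives
\[
    \h_{a_\alpha}(x) = t_\alpha\bigl(\norm{u_\alpha - x/t_\alpha} - 1\bigr) = -\dpair{x}{j(u_\alpha)} + \varepsilon_\alpha(x),
\]
with $\abs{\varepsilon_\alpha(x)} \leq 2\,t_\alpha\,\rho_E(\norm{x}/t_\alpha) \to 0$ since $\rho_E(\tau)/\tau\to 0$. Because $\h_{a_\alpha}(x) \to \h(x)$ by hypothesis, the scalars $\dpair{x}{j(u_\alpha)}$ converge to $-\h(x)$ for every $x \in E$. This is precisely weak-$*$ convergence of the net $(j(u_\alpha))$ in $E^*$, and the uniform bound $\norm{j(u_\alpha)} = 1$ forces the limit $v^*$ to lie in $B_{E^*}$. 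Putting $u^* := -v^*$ yields $\h(x) = \dpair{x}{u^*}$.

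\textbf{Converse.} When $\norm{u^*} = 1$, uniform smoothness of $E$ makes $E^*$ uniformly convex, hence $E$ is reflexive (Milman--Pettis), and James's theorem provides $u \in S_E$ with $\dpair{u}{u^*} = 1$; smoothness forces $j(u) = u^*$. Taking $a_n := -nu$ and re-running the forward expansion with $u_\alpha \equiv u$ gives $\h_{a_n}(x) \to \dpair{x}{j(u)} = \dpair{x}{u^*}$. For $\norm{u^*} < 1$, the same expansion reduces the task to constructing $u_n \in S_E$ and $t_n \to \infty$ such that $j(u_n) \to -u^*$ pointwise on $E$, after which $a_n := -t_n u_n$ works. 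Writing $u^* = \lambda v^*$ with $\lambda := \norm{u^*} \in [0,1)$ and $v^* \in S_{E^*}$, one picks $v \in S_E$ with $j(v) = v^*$ and tilts $-v$ by a weakly-null sequence of norm-one vectors, rescaling to keep $u_n$ on the unit sphere, so that the duality images $j(u_n)$ approach $-\lambda v^* = -u^*$.

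\textbf{Main obstacle.} The forward direction is essentially a one-line Taylor expansion once the uniform modulus of smoothness is recorded, and the limit automatically lands in $B_{E^*}$ by weak-$*$ compactness. The substantive work is on the converse side, in the regime $\norm{u^*} < 1$. Although $S_{E^*}$ is weak-$*$ dense in $B_{E^*}$ in any infinite-dimensional reflexive space---which immediately gives a \emph{net} version of the claim via Proposition~\ref{prop:Busemann}---extracting a \emph{sequence} $(u_n) \subset S_E$ whose duality images $j(u_n)$ converge to $-u^*$ pointwise on all of $E$ is delicate, because $j$ is only norm-to-weak-$*$ continuous in general. The 1-Lipschitz equicontinuity of the functionals $x \mapsto \h_{a_n}(x)$ allows one to promote pointwise convergence on a countable dense subset to pointwise convergence on $E$, which is where a separability reduction and the careful tilting construction do the real work.
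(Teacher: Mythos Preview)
Your forward direction is essentially identical to the paper's: both linearize $\h_{a_\alpha}$ via the uniform Fr\'echet expansion of the norm, obtain $\h_{a_\alpha}(x)=-\dpair{x}{j(u_\alpha)}+o(1)$, and read off a weak-$*$ limit in $B_{E^*}$. Your treatment of the case $\norm{u^*}=1$ in the converse is also fine and is the special case $t_n=0$ of the paper's construction.

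The gap is in the converse for $\norm{u^*}<1$. You correctly reduce the problem to finding $u_n\in S_E$ with $j(u_n)\to -u^*$ weakly, but your proposed construction---perturb $-v$ in $E$ by a weakly null sequence and renormalize---does not come with any argument that $j(u_n)$ lands where you want. As you yourself note, $j$ is only norm-to-weak-$*$ continuous, so a weak perturbation of $u_n$ in $E$ gives no a priori control over $j(u_n)$; the sentence ``so that the duality images $j(u_n)$ approach $-\lambda v^*$'' is precisely the conclusion to be proved, not a consequence of the construction as stated. The separability reduction and equicontinuity argument you sketch afterwards address a different issue (upgrading convergence on a dense set to all of $E$) and do not close this gap.

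The paper sidesteps the difficulty by tilting in the \emph{dual} rather than in $E$: pick any sequence $(f_n^*)\subset S_{E^*}$ converging weakly to $0$, choose scalars $t_n$ with $\norm{u^*+t_nf_n^*}=1$, and then use that $j:S_E\to S_{E^*}$ is surjective (reflexivity plus smoothness) to select $u_n\in S_E$ with $j(u_n)=u^*+t_nf_n^*$. Since the $t_n$ are bounded, $j(u_n)\to u^*$ weakly by construction, and setting $a_n=nu_n$ the forward expansion gives the result immediately---no separability reduction or primal-side tilting is needed. The missing idea in your write-up is exactly this: build the desired duality images first in $E^*$ and pull them back, rather than trying to push a primal construction forward through $j$.
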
  

\begin{proof}  
Since $E$ is uniformly smooth, the norm of $E$ has the 
    following property \cite[p.~36]{Diestel1975}: 
    $$
        \lim_{t\to 0}\frac{\norm{u+tv} -\norm{u} 
            - \dpair{tv}{j(u)}}{\norm{tv}} = 0
    $$
    uniformly for all $u,v\in S_E$, where $j(u)$ is the unique 
    dual vector of $u$. Assume that $a_\alpha\neq 0$ for all 
    $\alpha$ and let $u_\alpha = a_\alpha / \norm{a_\alpha}$.
    Fix $x\in E$ and let $t v = -x/\norm{a_\alpha}$. 
    Note that
    $$
        \h_\alpha(x) =  \norm{x-a_\alpha} - \norm{a_\alpha} 
                    = \norm{a_\alpha}(\norm{u_\alpha + tv}-1).
    $$
    Thus, as $\norm{a_\alpha}\to\infty$ we get
    $$
        \h_\alpha(x) = \norm{a_\alpha}\big(\dpair{tv}{j(u_\alpha)} 
                        + \norm{tv}o(1)\big)  
                = - \dpair{x}{j(u_\alpha)} + \norm{x}o(1).
    $$
    By the Banach-Alaoglu theorem, the net $(j(u_\alpha))$ has a limit point
    $-u^*\in B_{E^*}$, and hence $\h(x)= \dpair{x}{u^*}$.
    
    Conversely, let $u^* \in B_{E^*}$, choose any sequence $(f_n^*)$ 
    in $S_{E^*}$ converging weakly to $0$ and scalars $t_n$ so that 
    $$ 
        \norm{u^* + t_n f_n^*} = 1.
    $$   
    Let $u_n\in S_E$ be such that $j(u_n)= u^* + t_nf_n^*$ and 
    let $a_n = nu_n$. Then $\h_{a_n}(x)$ converges to 
    $\dpair{x}{u^*}$ for all $x\in E$. 
\end{proof}

As seen from the Example \ref{1normi}, the nonsmooth points on the 
unit sphere $S_E$ create metric functionals which are not linear 
and simultaneously all linear functionals of norm at most $1$
are not metric functionals. 

In $\ell_1$, every metric functional that is linear has the form 
$\h(x) =  \sum_{j=1} ^\infty  \varepsilon_j x_j$ where 
$\varepsilon_j \in \{ -1, 1 \}$ for all $j\in \N$. Thus, $\ell_1$ 
does have ZFP.    

Karlsson showed \cite[Prop.~22]{Karlsson2023} that $\ell_\infty$ 
has ZFP. Here we give a different proof. Let $S$ be a nonempty 
set and let $\mathcal{B}(S)$ be the space of real-valued bounded 
functions equipped with sup-norm.
\begin{proposition}
    $\mathcal{B}(S)$ has ZFP.
\end{proposition}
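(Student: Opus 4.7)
My plan is to argue by contradiction. Suppose $\h \in \mathcal{B}(S)^\diamondsuit$ vanishes identically, and write $\h = \lim_\alpha \h_{a_\alpha}$ pointwise along some net $(a_\alpha)$ in $\mathcal{B}(S)$. I will test $\h$ on the constant functions $\lambda\mathbf{1}_S$ (for suitable $\lambda \in \R$), which always lie in $\mathcal{B}(S)$ since $S$ is a fixed set. Exhibiting a single $\lambda$ for which $\h(\lambda\mathbf{1}_S)\neq 0$ is enough for a contradiction.

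The argument will split on whether the norms $\norm{a_\alpha}_\infty$ remain bounded. In the bounded case $\norm{a_\alpha}_\infty \leq K$ uniformly, choosing $\lambda > 2K$ forces $\lambda - a_\alpha(s) > 0$ for every $s\in S$, so $\norm{\lambda\mathbf{1}_S - a_\alpha}_\infty = \lambda - \inf_s a_\alpha(s) \geq \lambda - K$. Combined with $\norm{a_\alpha}_\infty \leq K$, this gives the uniform estimate $\h_{a_\alpha}(\lambda\mathbf{1}_S) \geq \lambda - 2K > 0$, and passing to the limit yields $\h(\lambda\mathbf{1}_S) \geq \lambda - 2K > 0$. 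In the unbounded case I would first pass to a subnet with $\norm{a_\alpha}_\infty \to \infty$, and then invoke a cofinality/pigeonhole step to refine further so that along the subnet either $\sup_s a_\alpha(s) = \norm{a_\alpha}_\infty$ for every $\alpha$, or $-\inf_s a_\alpha(s) = \norm{a_\alpha}_\infty$ for every $\alpha$. In the first subcase I test at $x = -\mathbf{1}_S$: for any $\varepsilon > 0$, picking $s_\alpha \in S$ with $a_\alpha(s_\alpha) > \norm{a_\alpha}_\infty - \varepsilon$ (eventually positive, since $\norm{a_\alpha}_\infty \to \infty$) yields $\abs{-1 - a_\alpha(s_\alpha)} = 1 + a_\alpha(s_\alpha) > 1 + \norm{a_\alpha}_\infty - \varepsilon$, hence $\h_{a_\alpha}(-\mathbf{1}_S) > 1 - \varepsilon$, so in the limit $\h(-\mathbf{1}_S) \geq 1$. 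The second subcase is symmetric, tested at $+\mathbf{1}_S$.

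The only conceptual point worth flagging is the choice of test functions: unlike in $\ell_p$, where the coordinate structure delivers ready-made functionals, the space $\mathcal{B}(S)$ has no canonical structure beyond its sup-norm, but the constants $\lambda\mathbf{1}_S$ are perfectly adapted to this norm and distinguish the bounded and unbounded regimes. Once the case split and the routine net-extraction are in place, the sup/inf bookkeeping does the rest.
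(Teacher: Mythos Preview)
Your proof is correct and follows essentially the same idea as the paper's: test the metric functional at the constant functions $\pm\mathbf{1}_S$, pick for each $a_\alpha$ a point $s_\alpha$ nearly realizing $\norm{a_\alpha}$, and pass to a subnet on which the sign of $a_\alpha(s_\alpha)$ is constant. The paper handles both regimes at once without your bounded/unbounded split---and in fact your own ``unbounded'' argument already works verbatim for any $\varepsilon<1$ regardless of whether $\norm{a_\alpha}\to\infty$, so your bounded case is superfluous.
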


\begin{proof}
Consider $\h\in \mathcal{B}(S)^\diamondsuit$ 
and assume that $(a_\alpha)$ is a net in 
$\mathcal{B}(S)$ such that 
$\h=\lim_\alpha \h_\alpha$ where 
$\h_\alpha(x)=\norm{x-a_\alpha}-\norm{a_\alpha}$ 
for all $x\in \mathcal{B}(S)$. Now, let 
$(\delta_\alpha)$ be a net of positive reals 
converging to $0$. Then, for each $\alpha$, 
there exists a point $s_\alpha\in S$ such that
$$
\norm{a_\alpha} \leq \abs{a_\alpha(s_\alpha)} 
            + \delta_\alpha.
$$
Suppose that $(a_\alpha)$ has a subnet 
$(a_\beta)$ for which $a_\beta(s_\beta) > 0$. 
Let $u\in \mathcal{B}(S)$ be the constant 
function $u(s)=-1$ for all $s\in S$. It follows 
that
$$
\norm{u-a_\beta} \geq 1 + a_\beta(s_\beta) 
        \geq 1 + \norm{a_\beta} -\delta_\beta.
$$
Thus we have $1 - \delta_\beta \leq \h_\beta(u) \leq 1$, 
and hence $\h(u)=1$. If such subnet $(a_\beta)$ 
does not exist, consider $u=1$ instead.
\end{proof}

By an argument quite similar to the one 
used in the previous proof, we can show that
the Banach space $\mathcal{C}(K)$ of 
continuous functions on a compact Hausdorff
space $K$ has ZFP.

\begin{proposition}
Let $K$ be a compact Hausdorff space and let 
$u(s)=1$ for all $s\in K$. Then for all 
$\,\h \in \mathcal{C}(K)^\diamondsuit$ and 
for all $t \geq 0$ we have
$$
        \h(tu) = t\ \ \text{or}\ \ \h(-tu) = t.
$$ 
In particular, $\mathcal{C}(K)$ has ZFP.
\end{proposition}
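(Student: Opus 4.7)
The plan is to mirror the proof of the preceding proposition for $\mathcal{B}(S)$, taking advantage of the fact that on a compact Hausdorff $K$ the sup-norm of every $f \in \mathcal{C}(K)$ is actually attained at some point of $K$, so we can dispense with the auxiliary $\delta_\alpha$ correction. Given $\h \in \mathcal{C}(K)^\diamondsuit$, I would write $\h = \lim_\alpha \h_{a_\alpha}$ for some net $(a_\alpha)$ in $\mathcal{C}(K)$, where $\h_{a_\alpha}(x) = \norm{x - a_\alpha} - \norm{a_\alpha}$, and for each index $\alpha$ choose $s_\alpha \in K$ with $\abs{a_\alpha(s_\alpha)} = \norm{a_\alpha}$.

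The key step is a dichotomy on the sign of $a_\alpha(s_\alpha)$: either there is a cofinal subnet along which $a_\beta(s_\beta) \geq 0$, or else $a_\alpha(s_\alpha) < 0$ eventually, in which case I pass to the terminal tail. In either situation I obtain a subnet of uniform sign, still converging pointwise to $\h$. In the nonnegative case, $a_\beta(s_\beta) = \norm{a_\beta}$, so evaluating at $x = -tu$ yields
\[
    \norm{-tu - a_\beta} \geq \abs{-t - a_\beta(s_\beta)} = t + \norm{a_\beta},
\]
giving $\h_{a_\beta}(-tu) \geq t$, while the triangle inequality gives $\h_{a_\beta}(-tu) \leq \norm{-tu} = t$. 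Hence $\h_{a_\beta}(-tu) = t$ for every $\beta$, and passing to the limit $\h(-tu) = t$ for all $t \geq 0$. The nonpositive case is symmetric: evaluating at $x = tu$ yields $\h(tu) = t$ for all $t \geq 0$.

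The ZFP conclusion then follows at once by taking $t = 1$: for any metric functional $\h \in \mathcal{C}(K)^\diamondsuit$, at least one of the values $\h(u)$ or $\h(-u)$ equals $1$, hence is nonzero.

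I do not anticipate a genuine obstacle; the only care needed is in the subnet argument, namely justifying that one may always extract a subnet on which the sign of $a_\alpha(s_\alpha)$ is constant (direct from the definition of a net, since cofinality of one of the two sign classes must hold) and that the consequent subnet still converges to the same $\h$. Everything else reduces to the two-line sup-norm estimate that identifies $\pm tu$ as an extremal test element.
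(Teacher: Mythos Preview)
Your proposal is correct and follows precisely the route the paper indicates, namely adapting the $\mathcal{B}(S)$ argument; your observation that compactness of $K$ lets you drop the $\delta_\alpha$ correction is exactly the simplification that makes the $\mathcal{C}(K)$ case cleaner. The only point deserving a touch more care is the subnet extraction: rather than invoking cofinality directly, it is tidier to note that for every $\alpha$ at least one of $\h_{a_\alpha}(tu)=t$ or $\h_{a_\alpha}(-tu)=t$ holds, so the limit $(\h(tu),\h(-tu))$ lies in the closed set $(\{t\}\times[-t,t])\cup([-t,t]\times\{t\})$.
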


As any normed space $E$ has metric functionals which are not 
bounded below, the question of $X\subset E$ having UMP is 
interesting essentially only when $X$ is bounded. We shall 
formulate the observations for $X=B_E$, the closed unit ball 
of $E$. 

The first observation is that $B_{c_0}$ does not have ZFP. 
Hence, additional assumptions are needed on $E$ such that $B_E$ 
would have ZFP or UMP. We consider three different properties 
of $E$ which guarantee that $B_E$ has UMP or at least ZFP.   

\begin{definition}
A Banach space $E$ has the Radon-Riesz property if weak 
convergence and convergence of the norms imply strong 
convergence: $a_n \rightarrow a$ weakly and 
$\norm{a_n} \to \norm{a}$ imply $\norm{a_n - a} \to 0$.
\end{definition}

For example, every uniformly convex Banach space has the 
Radon-Riesz property.

\begin{proposition}
    Let $E$ be a Banach space with the Radon-Riesz property. 
    If the closed unit ball $B_E$ is weakly compact, then 
    $B_E$ has ZFP.
\end{proposition}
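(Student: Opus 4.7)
The plan is to argue by contradiction. Suppose some $\h \in B_E^\diamondsuit$ vanishes identically on $B_E$, and write $\h = \lim_\alpha \h_{a_\alpha}$ with $(a_\alpha) \subset B_E$. Using weak compactness of $B_E$ together with compactness of $[0,1]$, I would pass to a subnet so that $a_\alpha \to a$ weakly for some $a \in B_E$ and $\norm{a_\alpha} \to r$ for some $r \in [0,1]$. The assumption that $\h \equiv 0$ on $B_E$ then translates into $\lim_\alpha \norm{x - a_\alpha} = r$ for every $x \in B_E$.

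Next I would apply the weak lower semicontinuity of the norm to the net $a_\alpha - x$, which converges weakly to $a - x$, to obtain $\norm{a - x} \leq r$ for every $x \in B_E$. Now the elementary geometric identity
$$
\sup_{x \in B_E} \norm{a - x} = 1 + \norm{a}
$$
(attained at $x = -a/\norm{a}$ when $a \neq 0$, and at any unit vector when $a = 0$) forces $1 + \norm{a} \leq r \leq 1$. Consequently $a = 0$ and $r = 1$.

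The final step invokes the Radon-Riesz property. Fix any $u \in S_E$ (nonempty, as $E \neq \{0\}$); then $a_\alpha - u$ converges weakly to $-u$, and $\norm{a_\alpha - u} \to 1 = \norm{-u}$. By the Radon-Riesz property, $a_\alpha - u \to -u$ in norm, hence $\norm{a_\alpha} \to 0$, contradicting $\norm{a_\alpha} \to 1$.

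The conceptual work is concentrated in the middle step: recognizing that vanishing of $\h$ on all of $B_E$, combined with weak lower semicontinuity, pins the pair $(a, r)$ down to the corner case $(0, 1)$. Once this reduction is in place, the Radon-Riesz property applied against any unit vector delivers the contradiction immediately, and neither uniform convexity nor any quantitative version of Radon-Riesz is required.
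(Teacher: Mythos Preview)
Your argument is correct and follows the same overall strategy as the paper's proof: pass to a weak limit $a$ of the generating net and a limit $r$ of the norms, use the antipodal direction $-a/\norm{a}$ to dispose of the case $a\neq 0$, and invoke the Radon--Riesz property for the case $a=0$. The two proofs differ only in packaging. You argue by global contradiction (assuming $\h\equiv 0$ on $B_E$) and use weak lower semicontinuity of the norm together with the identity $\sup_{x\in B_E}\norm{a-x}=1+\norm{a}$ to force $a=0$ and $r=1$ simultaneously; the paper instead works directly, estimating $\h(-a/\norm{a})\ge 1+\norm{a}-r>0$ via a pairing with an element of $j(-a/\norm{a}-a)$, which already shows this value is nonzero when $a\neq 0$ without assuming $\h$ vanishes elsewhere. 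For $a=0$ the paper then tests at a vector $z$ with $\norm{z}=r$, whereas you (having pinned down $r=1$) test at any $u\in S_E$; these are the same move.

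One technical point deserves attention: the Radon--Riesz property, as defined in the paper, is a \emph{sequential} statement, and you apply it to a subnet. The paper handles this by invoking the Eberlein--\v{S}mulian theorem to extract from $(a_\alpha)$ a weakly convergent subsequence before the Radon--Riesz step; you should insert the same passage to sequences to make your final step airtight.
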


\begin{proof} 
Let $\h$ be an element of $(B_E)^\diamondsuit$. 
Let us assume that $(a_\alpha)$ is a net of 
vectors in $B_E$ such that for all $x \in E$ 
we have $\h_\alpha(x) \to \h(x)$ where 
$\h_\alpha(x) = 
\norm{x-a_\alpha}-\norm{a_\alpha}$. 
Since $(\norm{a_\alpha})$ is a net of real 
numbers in the closed interval $[0,1]$, there 
is a subsequence $(a_{\alpha_n})$ such that 
$\norm{a_{\alpha_n}} \to r$ where 
$\norm{a} \leq r \leq 1$.
As $B_E$ is weakly compact, the 
Eberlein-\v{S}mulian theorem \cite{Diestel1984}
implies the existence of a subsequence, denoted 
again by $(a_{\alpha_n})$, that converges 
weakly to some $a \in B_E$. 

Let us assume that $\norm{a}$ is positive 
and consider the vector 
$y= -\frac{1}{\norm{a}}a$ in $B_E$. Then, we 
have $\norm{y-a} = 1 + \norm{a}$. If we 
denote by $f$ the linear functional 
$\frac{1}{1+\norm{a}} j(y-a)$ in $B_{E^*}$, 
we have 
$$
\norm{y - a_{\alpha_n}} -\norm{a_{\alpha_n}}  
    \geq  \dpair{y - a_{\alpha_n}}{f} 
    - \norm{a_{\alpha_n}}.        
$$
From the inequality shown above we get  
$\h(y) \geq \norm{y-a} - r = 1 + \norm{a} - r > 0$.
 
Now, let us assume that $\norm{a}$ equals $0$. 
We may assume that $r$ is positive, otherwise
we have $\h(x) = \norm{x}$ for all $x$. Let 
us choose a vector $z$ such that 
$\norm{z} = r$ and assume that 
$\norm{z - a_{\alpha_n}} \rightarrow r$. The 
Radon-Riesz property implies that 
$z - a_{\alpha_n}$ converges strongly to $z$, 
that is, $a_{\alpha_n}$ converges strongly to 
$0$, which is a contradiction. Thus, we 
have $\h(z) \neq 0$.
\end{proof}

\begin{definition}\label{Opial} 
A Banach space $E$ has the Opial property, 
if whenever a sequence $(a_n)$ in $E$ 
converges weakly to $a$, the following holds: 
$$
\liminf \norm{a_n-a} < \liminf \norm{a_n - x}\ \ 
        \text{for all}\ \ x \neq a.
$$
\end{definition}

It is known that all the $\ell_p$ spaces with 
$1 \leq p < \infty$ have the Opial property. 
It is shown in 
\cite[Theorem~1]{DalbySims1996} that a Banach 
space has the Opial property if and only if 
whenever $(a_n)$ converges weakly to a 
nonzero limit $a$, the number 
$\liminf_n \dpair{a}{a_n^*}$ is positive, 
where $a_n^* \in j(a_n)$. 

\begin{proposition}\label{prop:WeakCompUMP}
Let $E$ be a Banach space with the Opial 
property. If $X$ is a nonempty weakly compact 
subset of $E$, then $X$ has UMP. 
\end{proposition}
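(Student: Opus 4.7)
My plan is to prove Proposition \ref{prop:WeakCompUMP} in three steps: (i) establish existence of a minimizer via convex analysis; (ii) identify a natural candidate using weak compactness; (iii) upgrade to strict uniqueness via Opial's property. For (i), I observe that $\h$ is the pointwise limit of convex $1$-Lipschitz functions $\h_{a_\alpha}$ with $a_\alpha\in X$, so $\h$ itself is convex and $1$-Lipschitz on $E$. Being continuous and convex, $\h$ is weakly lower semicontinuous, and hence attains its infimum on the weakly compact set $X$.

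For (ii), I fix a defining net $(a_\alpha)\subset X$ with $\h_{a_\alpha}\to\h$ pointwise. By weak compactness of $X$ I pass to a subnet along which $a_\alpha\rightharpoonup\hat a\in X$, and further refine (using that $X$ is bounded) so that $\phi(y):=\lim_\alpha\norm{y-a_\alpha}$ exists for each $y\in E$. The identity $\h(y)=\phi(y)-\phi(o)$ reduces the task to showing that $\phi$ is uniquely minimized on $X$ at $\hat a$.

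For (iii), I fix $x\in X$ with $x\neq\hat a$ and aim for the strict inequality $\phi(\hat a)<\phi(x)$. Since Opial's property is formulated for sequences, I bridge to a sequence using the fact that weakly compact subsets of Banach spaces are angelic (Eberlein--Grothendieck). For each $n\geq 1$, the set
\[
A_n=\{a_\alpha:\abs{\norm{\hat a-a_\alpha}-\phi(\hat a)}<1/n\ \text{and}\ \abs{\norm{x-a_\alpha}-\phi(x)}<1/n\}
\]
is cofinal in the net, so $\hat a$ lies in the weak closure of each $A_n$. Angelicity yields, for each $n$, a sequence in $A_n$ weakly convergent to $\hat a$; a diagonal extraction delivers a single sequence $(w_n)$ with $w_n\rightharpoonup\hat a$, $\norm{w_n-\hat a}\to\phi(\hat a)$, and $\norm{w_n-x}\to\phi(x)$. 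Applying Opial's property to $(w_n)$ yields $\phi(\hat a)<\phi(x)$, hence $\h(\hat a)<\h(x)$.

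The main obstacle is the passage from the sequential formulation of Opial's property to the net-theoretic data supplied by the definition of $X^\diamondsuit$; this is handled via the angelic (Fr\'echet--Urysohn) structure of weakly compact subsets of Banach spaces together with the diagonal construction described above. The rest (convexity, weak lower semicontinuity, attainment of the minimum) is entirely standard.
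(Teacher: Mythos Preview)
Your argument is correct and follows the same strategy as the paper: take a weak cluster point $\hat a$ of the defining net as the candidate minimizer, then invoke Opial's property for the strict inequality $\h(\hat a)<\h(x)$. The paper is actually terser and less careful on precisely the point you flag as the main obstacle---it simply asserts that from the net one can extract ``a subsequence $(a_{\alpha_n})$'' with $a_{\alpha_n}\rightharpoonup a$ and $\norm{a_{\alpha_n}}\to r$, and then writes $\h(x)=\lim_n\norm{x-a_{\alpha_n}}-r$ without explaining why such a subsequence still computes $\h$. Your route through angelicity makes this passage explicit.

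One small remark: the diagonal extraction in step~(iii) is not automatic from the Fr\'echet--Urysohn property alone, since the weak topology need not be first countable. It is easily justified here by observing that the countable family $\{a_{n,k}:n,k\geq 1\}\cup\{\hat a\}$ lies in a separable closed subspace $F$ of $E$, and weakly compact subsets of a separable Banach space are metrizable in the weak topology; the diagonal can then be carried out against such a metric to produce $(w_n)$ with $w_n\rightharpoonup\hat a$. Step~(i) is correct but redundant, since step~(iii) already yields both existence and uniqueness of the minimizer.
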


\begin{proof}
Let $\h$ be an element of $X^\diamondsuit$. 
Let us assume that $(a_\alpha)$ is a net in 
$X$ such that for all $x \in X$ we have
$\h(x) = \lim_\alpha\h_{\alpha}(x)$, where
$\h_\alpha(x) = \norm{x-a_\alpha} 
    - \norm{a_\alpha}$.
As $X$ is weakly compact, there is a subsequence 
$(a_{\alpha_n})$, a vector $a\in X$, and a 
nonnegative real number $r$ such that 
$a_{\alpha_n}\to a$ weakly and 
$\norm{a_{\alpha_n}}\to r$.   
Due to the Opial property, for all 
$x \in X \setminus \{a\}$ we have
$$
\h(x) = \lim_{n} \norm{x - a_{\alpha_n}} - r 
    \,>\, \lim_{n} \norm{a - a_{\alpha_n}} - r 
    \,=\, \h(a).
$$
\end{proof}

Another important concept is the following.

\begin{definition}
(\cite{Lim1977}, \cite[Definition~3.1]{Solimini-Tintarev2015})  
Let $(a_n)$ be a sequence in a Banach space 
$E$. If there exists a vector $a \in E$ such 
that for all $x \in E$ we have 
$$
    \norm{a_n -a} \leq \norm{a_n - x} + o(1)
$$
when $n\to \infty$, then one says that 
$(a_n)$ is $\Delta-$convergent to $a$. 
\end{definition}

Weak convergence and $\Delta$-convergence 
coincide on Banach spaces that have the Opial 
property and are both uniformly smooth and 
uniformly convex 
\cite[Theorem~3.19]{Solimini-Tintarev2015}.
This is true in particular on all Hilbert 
spaces and all the $\ell_p$ spaces with 
$1<p<\infty$.
A key result for $\Delta$-convergence is the 
following theorem, which has a resemblance to
the Banach-Alaoglu theorem.

\begin{theorem}
\textup{(\cite[Theorem~4]{Lim1977})}
\label{BADelta}  
Let $E$ be a uniformly convex Banach space. 
Then, every bounded sequence $(a_n)$ in $E$ 
has a $\Delta$-convergent subsequence.
\end{theorem}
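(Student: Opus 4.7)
The plan is to extract the $\Delta$-convergent subsequence through the classical notion of asymptotic centers, with uniform convexity entering at two crucial points. For a bounded sequence $(x_n)$ in $E$ I would define
\[
    \varphi_{(x_n)}(x) := \limsup_n \norm{x_n - x},
\]
which is continuous, convex, and coercive on $E$, and denote its infimum by $\rho((x_n))$; since uniformly convex spaces are reflexive and $\varphi_{(x_n)}$ is weakly lower semicontinuous, this infimum is attained. The first use of uniform convexity is to show that the minimizer is unique: if $x_1\neq x_2$ both achieve the value $\rho$, applying uniform convexity to the normalized pairs $(x_n-x_1)/(\rho+\varepsilon)$ and $(x_n-x_2)/(\rho+\varepsilon)$ produces some $\delta > 0$ such that $\varphi_{(x_n)}((x_1+x_2)/2) \leq (1-\delta)\rho$, contradicting minimality. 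Call the unique minimizer the asymptotic center of $(x_n)$.

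Next I would minimize the asymptotic radius over subsequences. Set $\alpha := \inf\{\rho((a_{n_k})) : (a_{n_k}) \text{ is a subsequence of } (a_n)\}$ and construct inductively nested subsequences $(b^{(j)}_k)_k$ with $\rho((b^{(j)}_k)) \leq \alpha + 1/j$. The diagonal $c_k := b^{(k)}_k$ is eventually a subsequence of every $(b^{(j)}_k)$, so $\rho((c_k)) \leq \alpha + 1/j$ for all $j$, forcing $\rho((c_k)) = \alpha$. Crucially, the same reasoning shows that every further subsequence of $(c_k)$ has asymptotic radius exactly $\alpha$: at most $\alpha$ because passing to a subsequence cannot raise the $\limsup$, and at least $\alpha$ by the very definition of $\alpha$.

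To conclude, let $a$ be the asymptotic center of $(c_k)$, so $\limsup_k \norm{c_k - a} = \alpha$, and fix $x \in E$. Setting $s := \liminf_k \norm{c_k - x}$ and extracting a sub-subsequence $(c_{k_j})$ with $\norm{c_{k_j} - x}\to s$ gives $\rho((c_{k_j})) \leq s$; by the minimality just established, $\rho((c_{k_j})) = \alpha$, whence $\alpha \leq s$. Thus $\limsup_k \norm{c_k - a} \leq \liminf_k \norm{c_k - x}$ for every $x \in E$, which is strictly stronger than the requirement $\norm{c_k - a} \leq \norm{c_k - x} + o(1)$, so $(c_k)$ is $\Delta$-convergent to $a$. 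The main obstacle is the uniqueness-of-asymptotic-center step, where uniform convexity is used in an essential way; existence of the minimizer also warrants care, using reflexivity together with the coercivity and weak lower semicontinuity of $\varphi_{(x_n)}$.
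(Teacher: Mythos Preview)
The paper does not supply its own proof of this statement; Theorem~\ref{BADelta} is quoted from \cite{Lim1977} and used as a black box in the proposition that follows.  Your proposal is essentially the classical asymptotic-center argument underlying Lim's result, and the overall strategy is sound: reflexivity (via Milman--Pettis) together with weak lower semicontinuity of $\varphi_{(x_n)}$ gives existence of the asymptotic center, uniform convexity gives uniqueness, and a diagonal extraction produces a subsequence whose asymptotic radius is stable under further extraction, from which $\Delta$-convergence follows exactly as in your last paragraph.

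There is, however, one genuine wrinkle in the diagonal step.  Having set $\alpha=\inf\{\rho((a_{n_k}))\}$ over \emph{all} subsequences of $(a_n)$, you ask for \emph{nested} subsequences $b^{(1)}\supset b^{(2)}\supset\cdots$ with $\rho(b^{(j)})\le\alpha+1/j$.  But once $b^{(1)}$ is fixed, the infimum of $\rho$ over subsequences of $b^{(1)}$ may be strictly larger than $\alpha$: in a Hilbert space, interleave the zero sequence with an orthonormal sequence, so that $\alpha=0$, yet if $b^{(1)}$ happens to be the orthonormal block (which has radius $1\le\alpha+1$), no further subsequence of $b^{(1)}$ has radius below $1$.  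The standard repair is to let $\alpha_j$ denote the infimum of $\rho$ over subsequences of $b^{(j)}$ and to choose $b^{(j+1)}\subset b^{(j)}$ with $\rho(b^{(j+1)})\le\alpha_j+1/(j{+}1)$; the diagonal $(c_k)$ and every one of its subsequences then have asymptotic radius exactly $\beta:=\lim_j\alpha_j$, which is all your final paragraph actually uses.  With that adjustment the argument is complete.
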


We have immediately the following.

\begin{proposition}  
Let $X$ be a bounded subset of a uniformly 
convex Banach space $E$. If $(a_n)$ 
is a sequence in $X$ such that 
$(\h_{a_n})$ converges to a metric functional
$\,\h\in X^\diamondsuit$, then 
there exists a unique vector $a\in E$ such that 
for all $x\in E$ we have $\h(a) \leq \h(x)$.
\end{proposition}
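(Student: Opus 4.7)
The plan is to apply Lim's theorem (Theorem~\ref{BADelta}) to the bounded sequence $(a_n)$ in order to produce a minimizer, and then use uniform convexity to rule out a second minimizer. Since $X$ is bounded in the uniformly convex space $E$, I would pass to a subsequence $(a_{n_k})$ that is $\Delta$-convergent to some $a \in E$, and then to a further subsequence (still denoted $(a_{n_k})$) along which $\norm{a_{n_k}}$ converges to some $s \geq 0$; this is possible since $(a_n)$ is bounded. Throughout, $\h_{a_{n_k}} \to \h$ pointwise on $E$ because $(\h_{a_n})$ already converges pointwise on all of $E$.

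For existence, the definition of $\Delta$-convergence gives
$$
    \norm{a_{n_k} - a} \leq \norm{a_{n_k} - x} + o(1)\ \ \text{for all}\ \ x \in E.
$$
Subtracting $\norm{a_{n_k}}$ from both sides and letting $k \to \infty$, the left side tends to $\h(a)$ and the right side tends to $\h(x)$, so $\h(a) \leq \h(x)$ for every $x \in E$.

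For uniqueness, suppose $b \in E$ is another minimizer, so $\h(a) = \h(b) =: c$. By the choice of $s$, we have $\norm{a - a_{n_k}} \to c + s$ and $\norm{b - a_{n_k}} \to c + s$; set $r = c + s$. If $r = 0$, then $a_{n_k} \to a$ and $a_{n_k} \to b$ strongly, forcing $a = b$. Otherwise $r > 0$, and assuming $a \neq b$ for contradiction, I apply the uniform convexity of $E$ to the vectors $u_k = (a - a_{n_k})/r$ and $v_k = (b - a_{n_k})/r$, whose norms tend to $1$ and whose difference has constant norm $\norm{a - b}/r > 0$. Uniform convexity then produces a $\delta > 0$, independent of $k$, such that
$$
    \Bigl\lVert\tfrac{a+b}{2} - a_{n_k}\Bigr\rVert \leq r(1 - \delta) + o(1).
$$
Subtracting $\norm{a_{n_k}}$ and passing to the limit gives $\h\bigl(\tfrac{a+b}{2}\bigr) \leq c - r\delta < c$, contradicting the minimality of $\h(a)$.

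The step I expect to require the most care is the uniform convexity argument, since the vectors $u_k, v_k$ only have norms tending to $1$ rather than equal to $1$; one has to invoke the modulus of convexity on slightly rescaled vectors and absorb the small rescaling into the $o(1)$ term. The extraction of a $\Delta$-convergent subsequence is handed to us by Theorem~\ref{BADelta}, and the passage from $\Delta$-convergence to the minimality property of $\h$ is immediate from the definition, so these steps should be routine.
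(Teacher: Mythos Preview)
Your proposal is correct and follows essentially the same route as the paper: both extract a $\Delta$-convergent subsequence via Theorem~\ref{BADelta}, pass to a further subsequence with convergent norms to obtain the minimizer, and then use uniform convexity at the midpoint $\tfrac{a+b}{2}$ to rule out a second minimizer. The only cosmetic difference is that the paper first invokes convexity of $\h$ to pin $\h\bigl(\tfrac{a+b}{2}\bigr)$ at the minimum value and then derives $\norm{a-b}=0$, whereas you derive a strict drop $\h\bigl(\tfrac{a+b}{2}\bigr)<\h(a)$ directly; these are two phrasings of the same contradiction.
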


\begin{proof}
Due to Theorem~\ref{BADelta}, the sequence 
$(a_n)$ has a subsequence $(a_{n_k})$ that is 
$\Delta$-convergent to some vector $a\in E$.
By taking a suitable subsequence, we may 
assume that $\norm{a_{n_k}} \to r$. Then 
for all $x \in E$ the limit 
$\lim_{k} \norm{x - a_{n_k}}$ exists and 
equals $\h(x) + r$. By $\Delta$-convergence, 
we have $\h(a) \leq \h(x)$ for all $x\in E$.
Now, let us assume that there
exists $b\neq a$ such that $\h(a)=\h(b)$.
Since $\h$ is a convex functional, we have
$\h(\frac{1}{2}(a+b)) = \h(a)$.
Thus, we have 
$$
\lim_{k} \norm{a - a_{n_k}} 
    = \lim_{k} \norm{b - a_{n_k}}
    = \lim_{k} \norm{\frac{1}{2}(a+b) - a_{n_k}}
    = \h(a) + r.
$$
We cannot have $\h(a) + r = 0$ because
we assumed that $b\neq a$. So, let us assume 
that $M =\h(a) + r > 0$.
Since $E$ is uniformly convex, we must have
$\lim_k\norm{M^{-1}(a - a_{n_k}) - M^{-1}(b - a_{n_k})}=0$.
Thus, we have $M^{-1}\norm{a-b}=0$, 
which is a contradiction.
\end{proof} 

\section{Proofs of the main results}
\label{sec:proofs}

\begin{proof}
[\bf{Proof of Theorem~{\upshape\ref{thm:1}}}]
Let $T$ and $U$ be two elements of $\F$. That 
is, both $T$ and $U$ are affine nonexpansive
mappings from $E$ to itself and for all 
$x \in E$ we have $TUx=UTx$. Let us choose an
arbitrary vector $w\in X$. For each positive
integer $n$ let us consider the vectors $a_n$ 
and $b_n$ defined by the formulas 
$a_n = n^{-1}(w + Tw + \cdots + T^{n-1}w)$
and
$b_n = n^{-1}(a_n + Ua_n + \cdots + U^{n-1}a_n)$.
Since both $T$ and $U$ map the convex set $X$
into itself, both $(a_n)$ and $(b_n)$ are 
sequences in $X$ such that
$$
\norm{b_n - T b_n} 
    \leq \norm{a_n - T a_n} 
    \leq n^{-1}\norm{w - T^n w} 
$$
and 
$$
\norm{b_n - U b_n} 
    \leq n^{-1}\norm{a_n - U^n a_n} 
    \leq n^{-1}\norm{w - U^n w}.
$$
Due to Remark~\ref{rem:1}, our assumption 
$m(T,X)=m(U,X)=0$ implies that  
$\norm{b_n - T b_n} \to 0\,$ and 
$\norm{b_n - U b_n} \to 0\,$ 
when $n\to\infty$. Now, for each $n\geq 1$ 
let $\h_n \in E^\diamondsuit$ denote the 
metric functional defined for all $x\in E$
by the formula 
$\h_n(x) = \norm{x-b_{n}}-\norm{b_{n}}$.
Next, we notice that for all $x\in E$ we have
$$
\h_n(Tx) - \h_n(x) \leq \norm{b_n - T b_n}
$$
and 
$$
\h_n(Ux) - \h_n(x) \leq \norm{b_n - U b_n}.
$$
The compactness of $E^\diamondsuit$ implies 
that $(\h_n)$ has a limit point 
$\h\in E^\diamondsuit$ such that 
$\h(Tx)\leq \h(x)$ and $\h(Ux)\leq\h(x)$.

We have so far proved our theorem for two 
elements of the family $\F$. Our claim is in 
fact true for all finite subsets of the 
family $\F$, as a quick inspection of the 
previous procedure reveals. With that fact in 
mind, we now proceed to prove the general 
case. For each $T \in \F\,$ let $M_T$ denote 
the set of all metric functionals 
$\h \in E^\diamondsuit$ such that 
$\h(Tx)\leq \h(x)$ for all $x \in E$. We
notice that each $M_T$ is a nonempty closed 
subset of $E^\diamondsuit$ and the family 
$\{M_T\,\mid\,T \in \F\}$ has the finite 
intersection property. Since $E^\diamondsuit$
is compact, the set
$$
\bigcap_{T \in \F} M_T 
$$
is nonempty, and hence it contains a metric 
functional $\h$ that has the desired 
property.
\end{proof}

\begin{proof}
[\bf{Proof of Corollary~{\upshape\ref{cor:1}}}]
Let $E=\ell_p$ and let $X$ be the convex hull 
of $B$. Thus, $X$ is a nonempty bounded 
convex subset of $E$ such that $TX \subset X$ 
for all $T \in \F$. It follows from 
Remark~\ref{rem:1} that for all $T\in\F\,$ we 
have $m(T,X) = 0$. By Theorem~\ref{thm:1},  
there exists a metric functional 
$\h\in X^\diamondsuit$ that
is subinvariant for all $T\in \F$. Since $X$ 
is bounded, the candidates for $\h$ are of 
the form (\ref{eq:mf_lp}) when $p>1$ or 
internal when $p=1$. Thus, there exists a 
vector $z\in E$ such that for all $x\in E$ and 
for all $T\in \F$ we have
$$
\norm{Tx-z}_p \leq \norm{x-z}_p
$$
Therefore, $z\in E$ is the common fixed 
point of the family $\F$. 
\end{proof}

\begin{proof} 
[\bf{Proof of Theorem~{\upshape\ref{thm:2}}}]
Let us assume that $T$ and $U$ are two 
elements of the family $\F$. For each 
positive integer $n$ let $x_n$ denote the 
vector $T^{n}U^{n}x_0$. As both $T$ and 
$U$ map $X$ into $X$ and $x_0\in X$, 
$(x_{n})$ is a sequence in $X$. Now, let 
$(\h_n)$ be the sequence in 
$E^\diamondsuit$ defined by the formula
$\h_n(x)=\norm{x - x_{n}}-\norm{x_{n}}$ for 
all $x\in E$. Since $T$ and $U$ are defined
on the whole space $E$, for all $x\in E$ and
for all $n\geq 1$ we have
$$ 
\h_n(Tx) - \h_n(x) 
    \leq \norm{x_{n} - Tx_{n}}
    \leq \norm{T^{n}x_0 - T^{n+1}x_0} 
$$
and 
$$ 
\h_n(Ux) - \h_n(x) 
    \leq \norm{x_{n} - Ux_{n}}
    \leq \norm{U^{n}x_0 - U^{n+1}x_0}. 
$$
In the two previous inequalities each term 
situated farthest to the right is assumed to
converge to $0$ when $n\to\infty$. Therefore,
the compactness of $E^\diamondsuit$ implies 
that $(\h_n)$ has a limit point 
$\h\in E^\diamondsuit$ such that 
$\h(Tx)\leq \h(x)$ and $\h(Ux)\leq\h(x)$. 
The rest of the proof follows from a 
compactness argument similar to the one used 
in the proof of Theorem~\ref{thm:1}.
\end{proof}

\begin{proof} 
[\bf{Proof of Theorem~{\upshape\ref{thm:3}}}]
Let us assume that $\h\in X^\diamondsuit$ is 
subinvariant for the mapping $T:X\to X$. That 
is, for all $x\in X$ we have 
$\h(Tx) \leq \h(x)$. We know that the space 
$X$ has UMP due to 
Proposition~\ref{prop:WeakCompUMP}. Thus, 
there is a vector $a\in X$ such that for all 
$x\in X\setminus\{a\}$ we have 
$\h(a) < \h(x)$. The preceding two 
inequalities imply that $Ta=a$.
\end{proof}

\section{Acknowledgements}
Armando W. Guti\'{e}rrez was supported by the 
Research Council of Finland 
(funding decision number 348093).
\bibliographystyle{amsplain}
\bibliography{subinvariantMF}

\end{document}